\newtheorem{lemma}{Lemma}[section]
\newtheorem{theorem}[lemma]{Theorem}
\newtheorem{corollary}[lemma]{Corollary}
\newtheorem{proposition}[lemma]{Proposition}
\newtheorem{example}[lemma]{Example}
\newtheorem{remark}[lemma]{Remark}
\newcommand{\im}{\mathrm{im}}
\def\ker{\mathop{\rm ker}}
\def\im{\mathop{\rm im}}
\def\fix{\mathop{\rm Fix}}
\newcommand{\rank}{\mathrm{rank}}
\numberwithin{equation}{section}
\begin{document}
\begin{center}
\textbf{\large{Semigroups of linear transformations whose restrictions belong to a general linear group}}\\%\footnote{Part of this work was presented in the Arbeitstagung Allgemeine Algebra conference (AAA93), Bern, Switzerland, February 10--12, 2017.}}\\
\vspace{0.5 cm} Kritsada Sangkhanan \orcidlink{0000-0002-1909-7514}%\footnote{Corresponding author.} 
\end{center}

%%%%%%%%%%%%%%%%%%%%%%%%%%%abstract%%%%%%%%%%%%%%%%%%%%%%%%%%%%%%%%%%%%%%%%%%%%%%%%%%%%%%%%%%%%%%%%%%%%%%%%%%%%%%%%%%%%%%%%%%%%%

\begin{abstract}
	Let $V$ be a vector space and $U$ a fixed subspace of $V$. We denote the semigroup of all linear transformations on $V$ under composition of functions by $L(V)$. In this paper, we study the semigroup of all linear transformations on $V$ whose restrictions belong to the general linear group $GL(U)$, denoted by $L_{GL(U)}(V)$. More precisely, we consider the subsemigroup
	\[
		L_{GL(U)}(V)=\{\alpha\in L(V):\alpha|_U\in GL(U)\}
	\]
	of $L(V)$. In this work, Green's relations and ideals of this semigroup are described. Then we also determine the minimal ideal and the set of all minimal idempotents of it. Moreover, we establish an isomorphism theorem when $V$ is a finite dimensional vector space over a finite field. Finally, we find its generating set.
\end{abstract}
\noindent\textbf{2020 Mathematics Subject Classification:} 20M20, 20M17, 15A04, 15A03\\
\noindent\textbf{Keywords:} linear transformation semigroup, general linear group, Green's relations, ideal, generating set.

%%%%%%%%%%%%%%%%%%%%%%%%%%%Introduction%%%%%%%%%%%%%%%%%%%%%%%%%%%%%%%%%%%%%%%%%%%%%%%%%%%%%%%%%%%%%%%%%%%%%%%%%%%%%%%%%%%%%%%%%%%%%

\section{Introduction}

Let $T(X)$ be the full transformation semigroup on a set $X$ under composition of functions. For a nonempty subset $Y$ of $X$, E. Laysirikul \cite{Laysirikul} defined a subsemigroup $PG_Y(X)$ of $T(X)$ by 
\[
	PG_Y(X)=\{\alpha\in T(X):\alpha|_Y\in G(Y)\},
\]
where $G(Y)$ is the symmetric group on $Y$ and $\alpha|_Y$ is the restriction of $\alpha$ to $Y$. The author demonstrated that $PG_Y(X)$ is regular and provided descriptions of left regularity, right regularity, and complete regularity of elements in $PG_Y(X)$. In 2021, W. Sommanee \cite{Sommanee} presented a characterization of Green's relations and described ideals of $PG_Y(X)$. Additionally, the author established some isomorphism theorems for $PG_Y(X)$. For a finite set $X$, he calculated the cardinalities of $PG_Y(X)$ and its subsets of idempotents, and also computed their ranks.

In 2022, J. Konieczny \cite{Konieczny} studied the properties of a semigroup denoted as $T_{\mathbb{S}(Y)}(X)$, where $Y$ is a nonempty subset of $X$ and $\mathbb{S}(Y)$ is a subsemigroup of $T(Y)$, the set of all transformations on $Y$. $T_{\mathbb{S}(Y)}(X)$ consists of transformations $\alpha\in T(X)$ such that the restriction of $\alpha$ to $Y$ belongs to $\mathbb{S}(Y)$, namely,
\[
	T_{\mathbb{S}(Y)}(X)=\{\alpha\in T(X):\alpha|_Y\in \mathbb{S}(Y)\}.
\]
In \cite{Konieczny}, the author focused on characterizing the regular elements of $T_{\mathbb{S}(Y)}(X)$ and investigated conditions under which $T_{\mathbb{S}(Y)}(X)$ forms a regular semigroup, inverse semigroup, or completely regular semigroup. Assuming that $\mathbb{S}(Y)$ includes the identity transformation $1_Y$, they characterized Green's relations of $T_{\mathbb{S}(Y)}(X)$. Then they applied these general results to derive specific outcomes for the semigroup $T_{\Gamma(Y)}(X)$, where $\Gamma(Y)$ represents the semigroup of full injective transformations on $Y$. Additionally, they explored generalizations and extensions of the semigroup $T_{\mathbb{S}(Y)}(X)$. Recently, M. Sarkar and S. N. Singh \cite{Sarkar} provided a new characterization for $T_{\mathbb{S}(Y)}(X)$ to be a regular semigroup, or inverse semigroup.

Let $X_{n}=\{1,\,\ldots ,\,n\}$ where $n\geq 2$. The symmetric group on $X_{n}$ is denoted by $S_{n}$, the symmetric inverse semigroup by $I_{n}$, the full transformation semigroup by $T_{n}$, and the partial transformation semigroup by $P_{n}$. In \cite{Bugay}, L. Bugay, R. Sönmez and H. Ay{\i}k determined the ranks of certain subsemigroups of $I_{n}$, $T_{n}$, and $P_{n}$ consisting of transformations whose restrictions to the set $X_{m}$ belong to the semigroup $S_{m}$, $I_{m}$, $T_{m}$, or $P_{m}$ for $1\leq m\leq n-1$.

Let $L(V)$ be the semigroup of all linear transformations on a vector space $V$. For a subspace $U$ of $V$ and a subsemigroup $\mathbb{S}(U)$ of $L(U)$, analogous to $T_{\mathbb{S}(Y)}(X)$, the authors in \cite{Sarkar} defined a subsemigroup 
\[
	L_{\mathbb{S}(U)}(V)=\{\alpha\in L(V):\alpha|_U\in \mathbb{S}(U)\}
\]
of $L(V)$. They described regular elements in $L_{\mathbb{S}(U)}(V)$ and investigated when $L_{\mathbb{S}(U)}(V)$ is a regular semigroup, inverse semigroup, or completely regular semigroup. If $\mathbb{S}(Y)$ (or $\mathbb{S}(U)$) includes the identity of $T(Y)$ (or $L(U)$), they described unit-regular elements in $T_{\mathbb{S}(Y)}(X)$ (or $L_{\mathbb{S}(U)}(V)$) and determined when $T_{\mathbb{S}(Y)}(X)$ (or $L_{\mathbb{S}(U)}(V)$) is a unit-regular semigroup.

We will use the notation $GL(V)$ as the set of all automorphisms of a vector space $V$, in simpler terms, the set of all bijective linear transformations from $V$ onto $V$. It is well-known that $GL(V)$ together with composition as operation is a group called the \textit{general linear group}.

In this paper, we study a special case of $L_{\mathbb{S}(U)}(V)$ when $\mathbb{S}(U)$ is the general linear group of $U$. In other words, let $U$ be a subspace of a vector space $V$ and define
\[
	L_{GL(U)}(V)=\{\alpha\in L(V):\alpha|_U\in GL(U)\}.
\]
If $U=V$, then $L_{GL(U)}(V)=GL(U)$. From now on, we assume that $U\neq V$. Moreover, we have $L_{GL(U)}(V)^1=L_{GL(U)}(V)$ since $L_{GL(U)}(V)$ contains the identity map $1_V$ on $V$ as the identity element. 

We organize the paper as follows. In Section \ref{sec: Preliminaries}, we provide some notations and results that will be used later. In Section \ref{sec: Green's relations and ideals}, we describe Green's relations and ideals of $L_{GL(U)}(V)$. At the end of this section, we determine the minimal ideal of $L_{GL(U)}(V)$. In Section \ref{sec: Isomorphism theorems}, we establish an isomorphism theorem for $L_{GL(U)}(V)$. Finally, in Section \ref{sec: Generating sets}, we find a generating set for $L_{GL(U)}(V)$.

%%%%%%%%%%%%%%%%%%%%%%%%%%%Preliminaries%%%%%%%%%%%%%%%%%%%%%%%%%%%%%%%%%%%%%%%%%%%%%%%%%%%%%%%%%%%%%%%%%%%%%%%%%%%%%%%%%%%%%%%%%%%%%

\section{Preliminaries}\label{sec: Preliminaries}

We first state some notations and results of linear algebra and semigroup theory that will be used later. For all undefined notions, the reader is referred to \cite{Clifford1, Clifford2, Howie, Roman}.

In this paper, a subspace $U$ of a vector space $V$, spanned by a linearly independent subset $\{e_i\}$ of $V$, is denoted by $\langle e_i\rangle$. When we write $U =\langle e_i\rangle$, it means that the set $\{e_i\}$ serves as a basis for $U$, indicating that the dimension of $U$, denoted by $\dim U$, is $|I|$. It can be shown that $\dim U\leq\dim V$. Furthermore, if $\dim U=\dim V$ and $V$ is finite-dimensional, then $U=V$.

For each $\alpha\in L(V)$, the \textit{kernel} and \textit{range} of $\alpha$ are represented as $\ker\alpha$ and $V\alpha$, respectively. If a vector space $V$ is the internal direct sum of a family ${S_1,S_2,\ldots,S_n}$ of subspaces of $V$, it is denoted as $V=S_1\oplus S_2\oplus\cdots\oplus S_n$. Moreover, if $V=S\oplus T$, then $T$ is called a {\it complement} of $S$ in $V$. It should be emphasized that any subspace of a vector space has many complements, even though they are isomorphic.

Let $U$ be a subspace of a vector space $V$. The \textit{quotient space of $V$ modulo $U$}, denoted by $V/U$, is the set of all cosets of $U$ in $V$. It can be shown that all complements of $U$ in $V$ are isomorphic to $V/U$ and hence to each other. In particular, if $V$ is finite-dimensional, then $\dim(V/U)=\dim V-\dim U$.

Let $V$ be a vector space, and let $\{u_i\}$ be a subset of $V$. When we write $\sum a_iu_i$, it represents a finite linear combination:
$$
a_{i_1}u_{i_1}+a_{i_2}u_{i_2}+\cdots+a_{i_n}u_{i_n}
$$
where $n$ is a natural number, $u_{i_1},u_{i_2},\ldots,u_{i_n}$ are elements of $\{u_i\}$, and $a_{i_1},a_{i_2},\ldots,a_{i_n}$ are scalars. Suppose $\alpha\in L(V)$ and $U$ is a subspace of $V$. When we say $U\alpha=\langle u_j\alpha\rangle$, it means each $u_j$ belongs to $U$. It can be proven that $\{u_j\}$ is linearly independent. Furthermore, if $V\alpha=\langle v_i\alpha\rangle$, then $V$ can be decomposed into the direct sum of the kernel of $\alpha$ and the span of $\{v_i\}$, namely, $V=\ker\alpha\oplus\langle v_i\rangle$.

To simplify matters, we establish the following convention introduced in \cite[p. 241]{Clifford2}: if $\alpha$ is in $T(X)$, then we will represent it as 
$$
\alpha=\binom{A_i}{a_i}.
$$
It is implied that the subscript $i$ refers to an unspecified index set $I$, the abbreviation $\{a_i\}$ denotes $\{a_i: i\in I\}$, and that $\im\alpha=X\alpha = \{a_i\}$ and $a_i\alpha^{-1}= A_i$.

Likewise, we can utilize the above notation for elements within $L(V)$. To create a linear transformation $\alpha\in L(V)$, we initially select a basis $\{e_i\}$ for $V$ and a subset $\{a_i\}$ of $V$. Then we define $e_i\alpha = a_i$ for each $i\in I$ and extend this mapping linearly over $V$. To simplify this procedure, we can simply state that, given $\{e_i\}$ and $\{a_i\}$ in the given context, for any $\alpha\in L(V)$, we can represent it as
$$
\alpha=\binom{e_i}{a_i}.
$$

This paper relies on well-known results in linear algebra, which are stated below.

\begin{theorem}
	Let $V$ and $W$ be vector spaces over a field $\mathbb{F}$. Then $V\cong W$ if and only if $\dim V=\dim W$.
\end{theorem}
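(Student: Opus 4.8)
The plan is to prove both implications by passing to bases. For the forward direction, suppose $\varphi\colon V\to W$ is a linear isomorphism and fix a basis $\{e_i\}_{i\in I}$ of $V$. I would check that $\{e_i\varphi\}_{i\in I}$ is a basis of $W$: it spans $W$ because $\varphi$ is surjective and linear (every $w=v\varphi$ with $v=\sum a_ie_i$ becomes $\sum a_i(e_i\varphi)$), and it is linearly independent because $\varphi$ is injective (a nontrivial relation $\sum a_i(e_i\varphi)=0$ would, after applying $\varphi^{-1}$, give $\sum a_ie_i=0$). Since $\varphi$ is injective, $i\mapsto e_i\varphi$ is a bijection from $I$ onto this basis, so $W$ has a basis of cardinality $|I|$, whence $\dim V=\dim W$.

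For the converse, assume $\dim V=\dim W$. Choose a basis $\{e_i\}_{i\in I}$ of $V$ and a basis of $W$; since the two dimensions coincide there is a bijection from $I$ onto the index set of the chosen basis of $W$, so after reindexing we may write that basis as $\{f_i\}_{i\in I}$. Define $\varphi\colon V\to W$ on the basis by $e_i\varphi=f_i$ and extend linearly, i.e. take $\varphi=\binom{e_i}{f_i}$ in the convention recalled above; this is well defined precisely because $\{e_i\}$ is a basis of $V$, and it is linear by construction. Its image contains the spanning set $\{f_i\}$, hence equals $W$, so $\varphi$ is surjective; and it is injective because if $v=\sum a_ie_i\in\ker\varphi$ then $\sum a_if_i=0$, forcing every $a_i=0$ since $\{f_i\}$ is independent. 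Thus $\varphi$ is a linear isomorphism and $V\cong W$.

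The steps I would not belabour are the routine facts that a linear map is determined by, and can be freely prescribed on, a basis, that a linear surjection carries spanning sets to spanning sets, and that a linear injection carries independent sets to independent sets. The only genuinely delicate point is the infinite-dimensional case: the existence of bases and the well-definedness of $\dim$ (that any two bases of a space are equinumerous) rest on the axiom of choice and the cardinal exchange arguments of dimension theory. Since the excerpt already treats dimension and the inequality $\dim U\le\dim V$ as established background, I would cite those facts rather than reprove them, so the whole argument reduces to the two basis constructions above.
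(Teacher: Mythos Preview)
Your proof is correct and is the standard textbook argument. Note, however, that the paper does not actually prove this statement: it appears in the Preliminaries section under the sentence ``This paper relies on well-known results in linear algebra, which are stated below,'' and is simply quoted without proof as background. So there is no approach in the paper to compare against; your write-up would serve as the omitted justification.
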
 

\begin{theorem}
	Let $S$ and $T$ be subspaces of a vector space $V$ such that $S\cap T=\{0\}$. Then $\dim(S\oplus T)=\dim S+\dim T$.
\end{theorem}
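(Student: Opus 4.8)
The plan is to build a basis of $S\oplus T$ by concatenating a basis of $S$ with a basis of $T$, and then simply read off the dimension. First I would fix a basis $\{s_i:i\in I\}$ of $S$ and a basis $\{t_j:j\in J\}$ of $T$, arranging (by relabelling) that $I$ and $J$ are disjoint as index sets. The core claim is that $B=\{s_i:i\in I\}\cup\{t_j:j\in J\}$ is a basis of $S\oplus T$ and that, as a set of vectors, $B$ has exactly $|I|+|J|$ elements; granting this, $\dim(S\oplus T)=|B|=|I|+|J|=\dim S+\dim T$.

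Spanning is immediate: every $v\in S\oplus T$ has the form $v=s+t$ with $s\in S$ and $t\in T$, and writing $s$ as a finite linear combination of the $s_i$ and $t$ as a finite linear combination of the $t_j$ exhibits $v$ in the span of $B$. For linear independence, suppose a finite combination $\sum a_i s_i+\sum b_j t_j=0$. Rearranging gives $\sum a_i s_i=-\sum b_j t_j$, a vector lying simultaneously in $S$ and in $T$, hence in $S\cap T=\{0\}$; therefore $\sum a_i s_i=0$ and $\sum b_j t_j=0$, and independence of each of the two original bases forces all the $a_i$ and all the $b_j$ to vanish. The same reasoning, applied to a putative equation $s_i=t_j$, shows that no basis vector of $S$ can coincide with one of $T$ (their common value would be a nonzero element of $S\cap T$), so $B$ really is the disjoint union of the two bases and $|B|=|I|+|J|$.

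There is essentially no obstacle here; the only point requiring a moment's care is the cardinal bookkeeping when $S$ or $T$ is infinite-dimensional, which is resolved by reading $\dim S+\dim T$ as the cardinal sum $|I|+|J|$, which matches $|B|$ exactly. Alternatively, one could identify $S\oplus T$ with the external direct sum $S\times T$ and invoke the preceding theorem to pass from an explicit basis count to the dimension equality, but the direct basis construction above is self-contained and introduces no new objects, so I would present it as the proof.
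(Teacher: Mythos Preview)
Your proof is correct; it is the standard argument that the disjoint union of a basis of $S$ and a basis of $T$ is a basis of $S\oplus T$, with linear independence coming from $S\cap T=\{0\}$. The paper, however, does not supply its own proof of this statement: it is listed in the preliminaries as a well-known result of linear algebra and is simply quoted without argument, so there is nothing to compare against beyond noting that your write-up fills in exactly the routine verification the paper omits.
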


%%%%%%%%%%%%%%%%%%%%%%%%%%%Minimal ideals%%%%%%%%%%%%%%%%%%%%%%%%%%%%%%%%%%%%%%%%%%%%%%%%%%%%%%%%%%%%%%%%%%%%%%%%%%%%%%%%%%%%%%%%%%%%%

\section{Green's relations and ideals}\label{sec: Green's relations and ideals}

In this section, we will investigate Green's relations and ideals of $L_{GL(U)}(V)$. At the end of this section, we will describe the minimal ideal of $L_{GL(U)}(V)$.

We first consider the regularity of $L_{GL(U)}(V)$. To do this, the proposition appeared in \cite{Sarkar} is needed.

\begin{proposition}\cite[Proposition 4.3]{Sarkar}
	If $\mathbb{S}(U)$ is a subgroup of $GL(U)$, then $L_{\mathbb{S}(U)}(V)$ is regular.
\end{proposition}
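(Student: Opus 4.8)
The plan is to verify regularity directly: for an arbitrary $\alpha\in L_{\mathbb{S}(U)}(V)$ I would exhibit $\beta\in L_{\mathbb{S}(U)}(V)$ with $\alpha\beta\alpha=\alpha$. The starting point is that $\alpha|_U\in\mathbb{S}(U)\subseteq GL(U)$, so $\alpha$ restricts to an automorphism of $U$; in particular $U\alpha=U$, hence $U\subseteq V\alpha$, and $\ker\alpha\cap U=\{0\}$. Since $\mathbb{S}(U)$ is a \emph{subgroup} of $GL(U)$, it is closed under taking inverses, so the map $\gamma:=(\alpha|_U)^{-1}\colon U\to U$ belongs to $\mathbb{S}(U)$.

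Next I would fix decompositions adapted to $U$. Because $U\subseteq V\alpha$, choose a subspace $C$ of $V\alpha$ with $V\alpha=U\oplus C$, and then a complement $D$ of $V\alpha$ in $V$, so that $V=U\oplus C\oplus D$. Now define $\beta\in L(V)$ on a basis of $V$ assembled from bases of these three summands: put $\beta=\gamma$ on $U$; for each basis vector $c$ of $C$ (which lies in $V\alpha$) choose some $v_c\in V$ with $v_c\alpha=c$ and set $c\beta=v_c$; and let $\beta$ send every basis vector of $D$ to $0$. Extending linearly gives a well-defined $\beta\in L(V)$, and since the chosen basis of $U$ maps under $\beta$ exactly by $\gamma$, we get $\beta|_U=\gamma\in\mathbb{S}(U)$, i.e. $\beta\in L_{\mathbb{S}(U)}(V)$.

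It then remains to check $\alpha\beta\alpha=\alpha$, which (reading composition on the right, as in the paper) amounts to showing $w\beta\alpha=w$ for every $w\in V\alpha$, since each such $w$ is of the form $v\alpha$. Write $w=u+c$ with $u\in U$, $c\in C$; then by linearity $w\beta\alpha=(u\gamma)\alpha+(c\beta)\alpha=u+c=w$, using $(u\gamma)\alpha=u$ because $\gamma$ inverts $\alpha|_U$, and $(c\beta)\alpha=v_c\alpha=c$ by the choice of $v_c$. Hence $\alpha=\alpha\beta\alpha$, so $\alpha$ is regular, and as $\alpha$ was arbitrary, $L_{\mathbb{S}(U)}(V)$ is regular.

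I do not anticipate a real obstacle: the content is just an explicit von Neumann inverse. The only points to handle with care are the existence of the complements $C$ and $D$ (fine for arbitrary vector spaces via extension of a linearly independent set to a basis) and the insistence that $U$ appear literally as a direct summand of $V$ in the chosen decomposition — this is precisely what forces $\beta|_U$ to be the prescribed automorphism $\gamma$ rather than merely some map agreeing with it. An equivalent route is to take a complement $W$ of $\ker\alpha$ with $U\subseteq W$ (possible since $\ker\alpha\cap U=\{0\}$), let $\delta$ be the inverse of the isomorphism $\alpha|_W\colon W\to V\alpha$, check that $\delta|_U=(\alpha|_U)^{-1}$, and extend $\delta$ by $0$ on a complement of $V\alpha$; I would use whichever bookkeeping the rest of the paper finds more convenient.
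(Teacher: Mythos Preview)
Your argument is correct and is the natural direct construction of a von Neumann inverse in this setting. Note, however, that the paper does not supply its own proof of this proposition: it is quoted verbatim from \cite{Sarkar} and used as a black box to deduce that $L_{GL(U)}(V)$ is regular, so there is no in-paper proof to compare against. Your construction (decompose $V\alpha=U\oplus C$, send $U$ back by $(\alpha|_U)^{-1}\in\mathbb{S}(U)$, lift $C$ by arbitrary preimages, kill a complement of $V\alpha$) is exactly the standard one and is almost certainly what \cite{Sarkar} does; the alternative you mention via a complement $W\supseteq U$ of $\ker\alpha$ is equivalent and equally clean.
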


By a direct consequence of the above proposition, we have the following result immediately.

\begin{theorem}
	$L_{GL(U)}(V)$ is a regular semigroup.
\end{theorem}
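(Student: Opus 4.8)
The plan is to obtain the theorem as an immediate corollary of \cite[Proposition 4.3]{Sarkar}, which is quoted just above. That proposition says: if $\mathbb{S}(U)$ is a subgroup of $GL(U)$, then $L_{\mathbb{S}(U)}(V)$ is regular. So the only thing to check is that $GL(U)$ itself qualifies as an admissible choice of $\mathbb{S}(U)$ — that is, that $GL(U)$ is a subgroup of $GL(U)$. This is trivially true: $GL(U)$ is a group under composition (as noted in the Introduction), hence a subgroup of itself. Taking $\mathbb{S}(U) = GL(U)$ in the cited proposition then gives that $L_{GL(U)}(V) = L_{\mathbb{S}(U)}(V)$ is regular, which is exactly the assertion.

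In more detail, I would write: \emph{By definition, $L_{GL(U)}(V) = L_{\mathbb{S}(U)}(V)$ where $\mathbb{S}(U) = GL(U)$. Since $GL(U)$ is a subgroup of $GL(U)$, the claim follows immediately from Proposition~3.1 (i.e.\ \cite[Proposition 4.3]{Sarkar}).} That is essentially the whole proof; no case analysis or construction is required.

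There is no real obstacle here, since the work has been done in the cited source. If one wanted a self-contained argument instead of citing \cite{Sarkar}, the substantive step would be to exhibit, for each $\alpha \in L_{GL(U)}(V)$, an element $\beta \in L_{GL(U)}(V)$ with $\alpha\beta\alpha = \alpha$. The natural candidate is built by first inverting $\alpha$ on $U$ — using that $\alpha|_U \in GL(U)$, so $(\alpha|_U)^{-1}$ exists and maps $U$ onto $U$ — and then extending across a complement of $U$ in $V$: decompose $V = U \oplus U'$, and on $V\alpha$ pick a pre-image assignment that is consistent with $(\alpha|_U)^{-1}$ on the image of $U$, extending it to a linear right-quasi-inverse on all of $V\alpha$ and then to $V$ by sending a complement of $V\alpha$ to $0$ (or into $U'$). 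One must then verify that the resulting $\beta$ restricts to an automorphism of $U$ — this is where a little care is needed, since an arbitrary generalized inverse of $\alpha$ in $L(V)$ need not lie in $L_{GL(U)}(V)$ — but choosing $\beta|_U = (\alpha|_U)^{-1}$ forces $\beta|_U \in GL(U)$ by construction, and then $\alpha\beta\alpha = \alpha$ holds on $U$ automatically and can be arranged on $U'$ as well. Since the cited proposition already packages all of this, I would simply invoke it.
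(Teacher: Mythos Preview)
Your proposal is correct and matches the paper's own proof exactly: the paper states the theorem as ``a direct consequence of the above proposition'' (i.e.\ \cite[Proposition 4.3]{Sarkar}) with no further argument. Your additional sketch of a self-contained construction is unnecessary for the paper's purposes but is sound as a backup explanation.
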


The following lemma will be useful for proving a characterization of Green's relations and ideals.

\begin{lemma}\label{lem: pre ideal}
	Let $\alpha,\beta\in L_{GL(U)}(V)$. Then $\dim(V\alpha/U)\leq\dim(V\beta/U)$ if and only if $\alpha=\lambda\beta\mu$ for some $\lambda,\mu\in L_{GL(U)}(V)$.
\end{lemma}
\begin{proof}
	Let $\alpha,\beta\in L_{GL(U)}(V)$ be such that $V\alpha=W_1\oplus U$ and $V\beta=W_2\oplus U$. Suppose that $\dim(V\alpha/U)\leq\dim(V\beta/U)$. Then $\dim W_1\leq\dim W_2$. We can write $\ker\alpha=\langle v_r\rangle$, $\ker\beta=\langle v_s\rangle$, $W_1=\langle w_i\alpha\rangle$, $W_2=\langle w'_i\beta\rangle\oplus\langle w_j\beta\rangle$ and $U=\langle u_k\rangle$. We have $\langle u_k\alpha\rangle=U=\langle u_k\beta\rangle$ since $\alpha,\beta\in L_{GL(U)}(V)$. Then
	\[
		\alpha= \left(
		\begin{array}{ccc}
		v_r & w_i       & u_k\\
		0   & w_i\alpha & u_k\alpha\\ 
		\end{array}
		\right)\ \text{and}\ 
		\beta= \left(
		\begin{array}{cccc}
		v_s & w'_i      & w_j      & u_k\\
		0   & w'_i\beta & w_j\beta & u_k\beta\\ 
		\end{array}
		\right).
	\]
	Let $V=\langle w'_i\beta\rangle\oplus\langle u_k\beta\rangle\oplus\langle v_t\rangle$ and define
	\[
		\lambda= \left(
		\begin{array}{ccc}
		v_r & w_i  & u_k\\
		0   & w'_i & u_k\\ 
		\end{array}
		\right)\ \text{and}\ 
		\mu= \left(
		\begin{array}{ccc}
		v_t & w'_i\beta & u_k\beta\\
		0   & w_i\alpha & u_k\alpha\\ 
		\end{array}
		\right).
	\]
	It is straightforward to verify that $\lambda,\mu\in L_{GL(U)}(V)$ and $\alpha=\lambda\beta\mu$.

	Conversely, assume that $\alpha=\lambda\beta\mu$ for some $\lambda,\mu\in L_{GL(U)}(V)$. Let $U=\langle u_k\rangle$. Then we can write $V\alpha=\langle w_i\alpha\rangle\oplus\langle u_k\alpha\rangle$. Moreover, $V\alpha=V\lambda\beta\mu\subseteq V\mu$ which implies that $V\mu=A\oplus\langle w_i\alpha\rangle\oplus\langle u_k\alpha\rangle=A\oplus\langle w_i\lambda\beta\mu\rangle\oplus\langle u_k\lambda\beta\mu\rangle$ for some complement $A$ in $V\mu$. Hence $V\beta=B\oplus\langle w_i\lambda\beta\rangle\oplus\langle u_k\lambda\beta\rangle=B\oplus\langle w_i\lambda\beta\rangle\oplus U$ for some complement $B$ in $V\beta$. Therefore,
	\begin{eqnarray*}
		\dim(V\beta/U)&=&\dim(B\oplus\langle w_i\lambda\beta\rangle)=\dim B+\dim\langle w_i\lambda\beta\rangle\\
		&\geq&\dim\langle w_i\lambda\beta\rangle=\dim\langle w_i\alpha\rangle=\dim(V\alpha/U).
	\end{eqnarray*}
\end{proof}

\begin{corollary}\label{cor: dim inequality}
	Let $\alpha,\beta\in L_{GL(U)}(V)$. Then
	\[
		\dim(V\alpha\beta/U)\leq\dim(V\alpha/U)\ \text{and}\ \dim(V\alpha\beta/U)\leq\dim(V\beta/U).
	\]
\end{corollary}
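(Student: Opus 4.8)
The plan is to derive this as an immediate consequence of Lemma~\ref{lem: pre ideal}. First I would observe that $\alpha\beta = \alpha\beta\cdot 1_V$ exhibits $\alpha\beta$ in the form $\lambda\beta\mu$ with $\lambda=\alpha$ and $\mu=1_V$, both of which lie in $L_{GL(U)}(V)$ (recall that $1_V$ is the identity of this semigroup and $\alpha\in L_{GL(U)}(V)$ by hypothesis). By the ``if'' direction of Lemma~\ref{lem: pre ideal}, this yields $\dim(V\alpha\beta/U)\leq\dim(V\beta/U)$.

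For the other inequality, I would similarly write $\alpha\beta = 1_V\cdot\alpha\cdot\beta$, so that $\alpha\beta=\lambda\alpha\mu$ with $\lambda=1_V$ and $\mu=\beta$, again both in $L_{GL(U)}(V)$. Applying Lemma~\ref{lem: pre ideal} with the roles of the ``$\beta$'' of the lemma played by $\alpha$, we get $\dim(V\alpha\beta/U)\leq\dim(V\alpha/U)$. That completes both inequalities.

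There is essentially no obstacle here; the only point worth a sentence of care is to note that $L_{GL(U)}(V)$ is a monoid containing $1_V$ (already remarked in the introduction), which is what licenses using $1_V$ as one of the two-sided factors. Alternatively, one could bypass the lemma entirely and argue directly that $V\alpha\beta=(V\alpha)\beta\subseteq V\beta$ gives the first bound after quotienting by $U$, and that $\beta|_U\in GL(U)$ forces $U\subseteq V\alpha\beta$ so the containment descends to the quotient; but invoking Lemma~\ref{lem: pre ideal} is cleaner and I would present that version.
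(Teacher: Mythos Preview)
Your proof is correct and follows essentially the same approach as the paper: the paper also invokes Lemma~\ref{lem: pre ideal} by writing $\alpha\beta=1_V\alpha\beta$ to obtain $\dim(V\alpha\beta/U)\leq\dim(V\alpha/U)$ and then says the other inequality is similar. Your version simply spells out both factorizations explicitly.
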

\begin{proof}
	We see that $\alpha\beta=1_V\alpha\beta$ which implies by Lemma \ref{lem: pre ideal} that $\dim(V\alpha\beta/U)\leq\dim(V\alpha/U)$. The inequality $\dim(V\alpha\beta/U)\leq\dim(V\beta/U)$ can be shown similarly.
\end{proof}

To consider Green's relations on $L_{GL(U)}(V)$, we adopt the following notations introduced by \cite{Howie}. If $T$ is a subsemigroup of a semigroup $S$ and $a,b\in T$, then $(a,b)\in\mathscr{L}^T$ means that there exist $u,v\in T^1$ such that $ua=b$, $vb=a$ while $(a,b)\in\mathscr{L}^S$ means that there exist $s,t\in S^1$ such that $sa=b,$ $tb=a$. Obviously,
\[
	\mathscr{L}^T\subseteq\mathscr{L}^S\cap(T\times T).
\]
Similarly, we also use the following notations:
\begin{center}{$\mathscr{R}^T\subseteq\mathscr{R}^S\cap(T\times T)$,~~~~~$\mathscr{H}^T\subseteq\mathscr{H}^S\cap(T\times T),$}\end{center}
\begin{center}{$\mathscr{D}^T\subseteq\mathscr{D}^S\cap(T\times T)$,~~~~~$\mathscr{J}^T\subseteq\mathscr{J}^S\cap(T\times T).$}\end{center}
Due to the results given by T. E. Hall, it is well-known that if $T$ is a regular subsemigroup of $S$, we have
\begin{center}{$\mathscr{L}^T=\mathscr{L}^S\cap(T\times T)$,~~~~~$\mathscr{R}^T=\mathscr{R}^S\cap(T\times T)$,~~~~~$\mathscr{H}^T=\mathscr{H}^S\cap(T\times T)$}\end{center}
(see Proposition 2.4.2 of \cite{Howie} for details).

\begin{sloppypar}
	For convenience, we write $\mathscr{L}^U=\mathscr{L}^{L_{GL(U)}(V)}$. The same notation applies to $\mathscr{R}^U,\mathscr{H}^U,\mathscr{D}^U$ and $\mathscr{J}^U$.
\end{sloppypar}

Refer to the characterizations of Green's relations on $L(V)$ in \cite{Howie}, the authors obtained the following results. For each $\alpha,\beta\in L(V)$,
\begin{enumerate}[(1)]
	\item $(\alpha,\beta)\in\mathscr{L}^{L(V)}$ if and only if $V\alpha=V\beta$;
	\item $(\alpha,\beta)\in\mathscr{R}^{L(V)}$ if and only if $\ker\alpha=\ker\beta$;
	\item $(\alpha,\beta)\in\mathscr{H}^{L(V)}$ if and only if $V\alpha=V\beta$ and $\ker\alpha=\ker\beta$;
	\item $(\alpha,\beta)\in\mathscr{D}^{L(V)}$ if and only if $\dim(V\alpha)=\dim(V\beta)$;
	\item $\mathscr{D}^{L(V)}=\mathscr{J}^{L(V)}$.
\end{enumerate}

Now, we are in position to describe Green's relations on $L_{GL(U)}(V)$.

\begin{theorem}\label{thm: Green's relations on LGL(U)}
	Let $\alpha,\beta\in L_{GL(U)}(V)$. Then the following statements hold.
	\begin{enumerate}[(1)]
		\item $(\alpha,\beta)\in\mathscr{L}^U$ if and only if $V\alpha=V\beta$.
		\item $(\alpha,\beta)\in\mathscr{R}^U$ if and only if $\ker\alpha=\ker\beta$.
		\item $(\alpha,\beta)\in\mathscr{H}^U$ if and only if $V\alpha=V\beta$ and $\ker\alpha=\ker\beta$
		\item $(\alpha,\beta)\in\mathscr{D}^U$ if and only if $\dim(V\alpha/U)=\dim(V\beta/U)$.
		\item $\mathcal{D}^U=\mathcal{J}^U$.
	\end{enumerate}
\end{theorem}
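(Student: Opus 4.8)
The plan is to obtain (1)--(3) from a theorem of Hall together with the already-recalled description of Green's relations on $L(V)$, and to obtain (4) and (5) by pairing an explicit construction with Lemma~\ref{lem: pre ideal}. For (1)--(3): since $L_{GL(U)}(V)$ is a regular subsemigroup of $L(V)$, the result of Hall quoted above gives $\mathscr{L}^U=\mathscr{L}^{L(V)}\cap(L_{GL(U)}(V)\times L_{GL(U)}(V))$, and likewise for $\mathscr{R}^U$ and $\mathscr{H}^U$; intersecting the recalled characterizations of $\mathscr{L}^{L(V)}$, $\mathscr{R}^{L(V)}$ and $\mathscr{H}^{L(V)}$ with $L_{GL(U)}(V)\times L_{GL(U)}(V)$ yields (1), (2), (3) at once.

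For the sufficiency direction of (4), suppose $\dim(V\alpha/U)=\dim(V\beta/U)$. I would produce $\gamma\in L_{GL(U)}(V)$ with $V\gamma=V\alpha$ and $\ker\gamma=\ker\beta$; then $(\alpha,\gamma)\in\mathscr{L}^U$ and $(\gamma,\beta)\in\mathscr{R}^U$ by (1) and (2), so $(\alpha,\beta)\in\mathscr{L}^U\circ\mathscr{R}^U=\mathscr{D}^U$. To build $\gamma$: since $\beta|_U$ is injective, $U\cap\ker\beta=\{0\}$, so choose a complement $C$ of $\ker\beta$ in $V$ with $U\subseteq C$ and write $C=U\oplus D$; also write $V\alpha=U\oplus W$, which is possible because $U=U\alpha\subseteq V\alpha$. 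The restriction $\beta|_C\colon C\to V\beta$ is an isomorphism carrying $U$ onto $U$, hence induces an isomorphism $C/U\cong V\beta/U$, so $\dim D=\dim(C/U)=\dim(V\beta/U)=\dim(V\alpha/U)=\dim W$; fix an isomorphism $\phi\colon D\to W$. Define $\gamma$ on $V=U\oplus D\oplus\ker\beta$ by $u+d+k\mapsto u+\phi(d)$. Then $\gamma|_U=1_U\in GL(U)$, so $\gamma\in L_{GL(U)}(V)$; clearly $V\gamma=U\oplus W=V\alpha$; and since $U\cap W=\{0\}$ and $\phi$ is injective, $\ker\gamma=\ker\beta$, as required.

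For the necessity direction of (4) and for (5): in any semigroup $\mathscr{D}\subseteq\mathscr{J}$, so $\mathscr{D}^U\subseteq\mathscr{J}^U$, and it therefore suffices to show that $(\alpha,\beta)\in\mathscr{J}^U$ forces $\dim(V\alpha/U)=\dim(V\beta/U)$ — for then the sufficiency just proved gives $(\alpha,\beta)\in\mathscr{D}^U$, which simultaneously completes (4) and proves $\mathscr{J}^U\subseteq\mathscr{D}^U$, i.e. $\mathscr{D}^U=\mathscr{J}^U$. Since $1_V\in L_{GL(U)}(V)$, the semigroup is a monoid, so $(\alpha,\beta)\in\mathscr{J}^U$ means $\alpha=\lambda\beta\mu$ and $\beta=\lambda'\alpha\mu'$ for suitable $\lambda,\mu,\lambda',\mu'\in L_{GL(U)}(V)$; Lemma~\ref{lem: pre ideal} then yields $\dim(V\alpha/U)\le\dim(V\beta/U)$ and $\dim(V\beta/U)\le\dim(V\alpha/U)$, hence equality.

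The main obstacle is the sufficiency part of (4): realizing the equality of relative dimensions by a \emph{single} element of $L_{GL(U)}(V)$ whose range is $V\alpha$ and whose kernel is $\ker\beta$. This forces the careful choice of a complement of $\ker\beta$ that contains $U$ and the identification $\dim D=\dim(V\beta/U)$ via the induced map on quotient spaces — and it is exactly this last identification that keeps the argument valid when the dimensions involved are infinite. Everything else is routine verification.
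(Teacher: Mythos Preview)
Your proof is correct and follows essentially the same route as the paper: Hall's theorem for (1)--(3), an explicit $\gamma$ with $V\gamma=V\alpha$ and $\ker\gamma=\ker\beta$ for the sufficiency in (4), and Lemma~\ref{lem: pre ideal} for (5). The only organizational difference is that the paper proves the necessity in (4) directly---from $(\alpha,\gamma)\in\mathscr{L}^U$, $(\gamma,\beta)\in\mathscr{R}^U$ it reads off $\dim(V\alpha/U)=\dim(V\gamma/U)$ and then argues via the first isomorphism theorem that this quantity depends only on $\ker\gamma=\ker\beta$---whereas you fold that direction into the proof of (5) by passing through $\mathscr{J}^U$ and invoking Lemma~\ref{lem: pre ideal} twice; your packaging is arguably cleaner and sidesteps the slightly informal quotient notation the paper uses at that step. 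Your construction of $\gamma$ via the decomposition $V=U\oplus D\oplus\ker\beta$ and the induced isomorphism $C/U\cong V\beta/U$ is the coordinate-free version of the paper's basis-notation construction, and as you note it makes the infinite-dimensional case transparent.
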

\begin{proof}
	The statements (1), (2) and (3) follow from \cite[Proposition 2.4.2]{Howie}.

	To show (4), suppose that $(\alpha,\beta)\in\mathscr{D}^U$. Then $(\alpha,\gamma)\in\mathscr{L}^U$ and $(\gamma,\beta)\in\mathscr{R}^U$ for some $\gamma\in L_{GL(U)}(V)$. It follows by (1) and (2) that $V\alpha=V\gamma$ and $\ker\gamma=\ker\beta$. By using the first isomorphism theorem, we obtain
	\[
		\dim(V\alpha/U)=\dim(V\gamma/U)=\dim[(V/\ker\gamma)/U]=\dim[(V/\ker\beta)/U]=\dim(V\beta/U).
	\]

	Conversely, assume that $\dim(V\alpha/U)=\dim(V\beta/U)$. Let $U=\langle u_k\rangle$, $\ker\alpha=\langle v_i\rangle$ and $\ker\beta=\langle v_s\rangle$. We can write
	\[
		\alpha= \left(
		\begin{array}{ccc}
		v_i & v_j       & u_k\\
		0   & v_j\alpha & u_k\alpha\\ 
		\end{array}
		\right)\ \text{and}\ 
		\beta= \left(
		\begin{array}{ccc}
		v_s & v_j'      & u_k \\
		0   & v_j'\beta & u_k\beta \\ 
		\end{array}
		\right)
	\]
	where $\langle v_j\alpha\rangle$ and $\langle v_j'\beta\rangle$ are complements of $U$ in $V\alpha$ and $V\beta$, respectively. Define a function $\gamma$ by
	\[
		\gamma= \left(
		\begin{array}{ccc}
		v_s & v_j'      & u_k\\
		0   & v_j\alpha & u_k\alpha\\ 
		\end{array}
		\right).
	\]
	It is straightforward to verify that $\gamma\in L_{GL(U)}(V)$ such that $V\alpha=V\gamma$ and $\ker\gamma=\ker\beta$. Hence $\alpha\mathcal{L}^U\gamma\mathcal{R}^U\beta$ and thus $(\alpha,\beta)\in\mathscr{D}^U$.

	The statement (5) is a direct consequence of Lemma \ref{lem: pre ideal}.
\end{proof}

Now, we turn our attention to the ideals of $L_{GL(U)}(V)$.

\begin{theorem}
	The proper ideals of $L_{GL(U)}(V)$ are precisely the sets
	\[
		Q(k)=\{\alpha\in L_{GL(U)}(V):\dim(V\alpha/U)<k\}
	\]
	where $1\leq k\leq\dim(V/U)$.
\end{theorem}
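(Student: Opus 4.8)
The plan is to prove that the proper ideals are exactly the members of the family $\{Q(k) : 1 \le k \le \dim(V/U)\}$, which means checking that each such $Q(k)$ is a proper ideal and, conversely, that every proper ideal equals some $Q(k)$.

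For the first part, take $\alpha \in Q(k)$ and $\beta \in L_{GL(U)}(V)$; since $L_{GL(U)}(V)^1 = L_{GL(U)}(V)$, it suffices to invoke Corollary \ref{cor: dim inequality}, which gives $\dim(V\alpha\beta/U) \le \dim(V\alpha/U) < k$ and $\dim(V\beta\alpha/U) \le \dim(V\alpha/U) < k$, so $\alpha\beta, \beta\alpha \in Q(k)$ and $Q(k)$ is a two-sided ideal. It is nonempty, containing for instance any $\alpha$ that restricts to $1_U$ on $U$ and annihilates a complement of $U$ (then $V\alpha = U$, so $\dim(V\alpha/U) = 0 < k$), and it is proper because $1_V \notin Q(k)$, as $\dim(V 1_V/U) = \dim(V/U) \ge k$. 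A short argument also shows that $k < k'$ forces $Q(k) \subsetneq Q(k')$, so these ideals are pairwise distinct.

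For the converse, let $I$ be a proper ideal and put $K = \{\dim(V\alpha/U) : \alpha \in I\}$. The key structural fact, extracted from Lemma \ref{lem: pre ideal}, is that membership in $I$ depends only on $\dim(V\alpha/U)$: if $\alpha \in I$, $\beta \in L_{GL(U)}(V)$ and $\dim(V\beta/U) \le \dim(V\alpha/U)$, then $\beta = \lambda\alpha\mu$ for some $\lambda, \mu \in L_{GL(U)}(V)$, hence $\beta \in I$. Together with the fact that every cardinal $\le \dim(V/U)$ is realized as $\dim(V\gamma/U)$ for some $\gamma \in L_{GL(U)}(V)$ — one builds $\gamma$ fixing $U$ pointwise and acting appropriately on a complement of $U$ — this shows that $K$ is an initial segment of the cardinals $\le \dim(V/U)$, with $\dim(V/U) \notin K$ (otherwise $1_V \in I$ and $I$ would not be proper) and $0 \in K$ (as $I \ne \emptyset$). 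Let $k$ be the least cardinal not in $K$; then $1 \le k \le \dim(V/U)$ and $K$ is precisely the set of cardinals $< k$. Now $I \subseteq Q(k)$ is immediate from the definition of $K$, and if $\beta \in Q(k)$ then $\dim(V\beta/U) \in K$, so some $\alpha \in I$ has $\dim(V\alpha/U) = \dim(V\beta/U)$, and Lemma \ref{lem: pre ideal} yields $\beta \in I$; therefore $I = Q(k)$.

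The ideal verification and the two closing inclusions are routine. The one step to handle with care is the assertion that $K$ is an initial segment of cardinals, which is exactly where Lemma \ref{lem: pre ideal} (dependence on $\dim(V\alpha/U)$ alone) and the realizability of every admissible cardinal are used; when $V$ is finite-dimensional both points are obvious, and one may simply take $k = 1 + \max\{\dim(V\alpha/U) : \alpha \in I\}$.
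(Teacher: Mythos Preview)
Your proof is correct and follows essentially the same approach as the paper: both directions rely on Corollary~\ref{cor: dim inequality} to show each $Q(k)$ is a proper ideal, and on Lemma~\ref{lem: pre ideal} to show an arbitrary proper ideal coincides with some $Q(k)$. Your packaging via the initial segment $K$ and the explicit realizability of each cardinal is slightly more elaborate than the paper's direct argument (which simply takes $k$ to be the least cardinal strictly above every $\dim(V\alpha/U)$ with $\alpha\in I$ and reaches a contradiction), but the substance is the same.
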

\begin{proof}
	Let $\alpha\in L_{GL(U)}(V)$ and $\beta\in Q(k)$. By Corollary \ref{cor: dim inequality}, it is straightforward to verify that $\alpha\beta$ and $\beta\alpha$ are in $Q(k)$. Hence $Q(k)$ is an ideal. We can see that the identity map $1_V$ on $V$ is in $L_{GL(U)}(V)$ but not in $Q(k)$ since $\dim(V1_V/U)=\dim(V/U)\geq k$. Therefore, $Q(k)$ is a proper ideal of $L_{GL(U)}(V)$.

	Let $I$ be an ideal of $L_{GL(U)}(V)$ and $k$ the least cardinal greater than $\dim(V\alpha/U)$ for all $\alpha\in I$. Then $1\leq k\leq\dim(V/U)$. Obviously, $I\subseteq Q(k)$. We now focus on the other containment. Let $\alpha\in Q(k)$. Then $\dim(V\alpha/U)<k$. If $\dim(V\beta/U)<\dim(V\alpha/U)<k$ for all $\beta\in I$, then it contradicts to the choice of $k$. Hence there is $\beta\in I$ such that $\dim(V\alpha/U)\leq\dim(V\beta/U)$. It follows by Lemma \ref{lem: pre ideal} that $\alpha=\lambda\beta\mu$ for some $\lambda,\mu\in L_{GL(U)}(V)$. Thus $\alpha\in I$ since $I$ is an ideal.
\end{proof}

For each $1\leq k\leq\dim(V/U)$, define a subset $J(k)$ of $L_{GL(U)}(V)$ by
\[
	J(k)=\{\alpha\in L_{GL(U)}(V):\dim(V\alpha/U)=k\}.
\]
It follows by Theorem \ref{thm: Green's relations on LGL(U)} that $J(k)$ is a $\mathcal{J}$-class of $L_{GL(U)}(V)$. We can see that the number of $\mathcal{J}$-classes of $L_{GL(U)}(V)$ is $\dim(V/U)$. Obviously, if $\dim V$ is finite, then
\[
	Q(k)=J(0)\dot\cup J(1)\dot\cup J(2)\dot\cup\cdots\dot\cup J(k-1).
\]

It is clear that $Q(1)\subseteq Q(2)\subseteq\cdots$ and hence it forms a chain under inclusion. Consequently, we obtain the following result immediately.

\begin{corollary}
	The minimal ideal of $L_{GL(U)}(V)$ is the set
	\[
		Q(1)=J(0)=\{\alpha\in L_{GL(U)}(V):\dim(V\alpha/U)=0\}.
	\]
\end{corollary}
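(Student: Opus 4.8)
The plan is to verify that $Q(1)$ is a nonempty ideal of $L_{GL(U)}(V)$ that is contained in every ideal of $L_{GL(U)}(V)$; this is exactly what it means for $Q(1)$ to be the minimal ideal (kernel), and it forces uniqueness as well.

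First I would record the identification $Q(1)=J(0)$. Since $\alpha|_U\in GL(U)$ forces $U\alpha=U$, we always have $U\subseteq V\alpha$, so $\dim(V\alpha/U)$ is a well-defined cardinal; and $\dim(V\alpha/U)<1$ holds precisely when $\dim(V\alpha/U)=0$, i.e. when $V\alpha=U$. Hence $Q(1)=J(0)=\{\alpha\in L_{GL(U)}(V):V\alpha=U\}$. Next I would check that $Q(1)\neq\emptyset$: because $U\neq V$, we may write $V=U\oplus W$ for some nonzero subspace $W$, and the projection $\pi$ of $V$ onto $U$ along $W$ satisfies $\pi|_U=1_U\in GL(U)$ and $V\pi=U$, so $\pi\in Q(1)$.

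That $Q(1)$ is an ideal is the case $k=1$ of the preceding theorem classifying the proper ideals (equivalently, it is immediate from Corollary \ref{cor: dim inequality}). It remains to see that $Q(1)$ lies inside every ideal $I$ of $L_{GL(U)}(V)$. If $I=L_{GL(U)}(V)$ this is trivial. Otherwise $I$ is a proper ideal, so by the preceding theorem $I=Q(k)$ for some $k$ with $1\leq k\leq\dim(V/U)$; since the $Q(k)$ form the chain $Q(1)\subseteq Q(2)\subseteq\cdots$ already noted, we get $Q(1)\subseteq Q(k)=I$. Therefore $Q(1)$ is a nonempty ideal contained in every ideal, hence it is the minimal ideal of $L_{GL(U)}(V)$.

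There is essentially no real obstacle here, as this is a routine corollary of the ideal classification; the only points deserving a moment's care are confirming that $Q(1)$ is nonempty (so that it genuinely qualifies as the kernel rather than being vacuous) and the bookkeeping that the $Q(k)$ form an ascending chain with $Q(1)$ at the bottom.
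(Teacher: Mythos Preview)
Your proposal is correct and follows essentially the same reasoning as the paper: both rely on the classification of proper ideals as the $Q(k)$ and the observation that these form an ascending chain $Q(1)\subseteq Q(2)\subseteq\cdots$, whence $Q(1)$ is minimal. You are slightly more explicit than the paper in checking nonemptiness and in unpacking the identification $Q(1)=J(0)$, but the underlying argument is the same.
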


It is straightforward to verify that
\[
	Q(1)=\{\alpha\in L_{GL(U)}(V):V\alpha=U\}=\{\alpha\in L_{GL(U)}(V):V=\ker\alpha\oplus U\}.
\]

%%%%%%%%%%%%%%%%%%%%%%%%%%Isomorphism theorems%%%%%%%%%%%%%%%%%%%%%%%%%%%%%%%%%%%%%%%%%%%%%%%%%%%%%%%%%%%%%%%%%%%%%%%%%%%%%%%%%%%%%%%%%%%%%%

\section{Isomorphism theorems}\label{sec: Isomorphism theorems}

Let $S$ be a semigroup. We denote the set of all idempotents in $S$ by $E(S)$. Recall that the \textit{natural partial order} $\leq$ on $E(S)$ defined by, for $e, f\in E(S)$, 
\[
	e\leq f\ \text{if and only if}\ e=ef=fe.
\]
A \textit{minimal idempotent} is an idempotent $e\in E(S)$ such that there is no idempotent $f\in E(S)$ with $f<e$. In order to establish an isomorphism theorem for $L_{GL(U)}(V)$, we will first describe the set of all minimal idempotents of $L_{GL(U)}(V)$.

\begin{lemma}\label{lem: idempotent}
	$\epsilon\in L_{GL(U)}(V)$ is an idempotent if and only if $\epsilon|_{V\epsilon}$ is the identity map $1_{V\epsilon}$.
\end{lemma}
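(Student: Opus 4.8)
The statement is the standard characterization of idempotents in a transformation-type semigroup, so the plan is to prove both implications directly from the definition of composition of linear maps. Throughout, write $\epsilon \in L_{GL(U)}(V)$, and recall that every element of $V\epsilon$ has the form $v\epsilon$ for some $v \in V$.

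For the forward direction, suppose $\epsilon^2 = \epsilon$. Take an arbitrary element $w \in V\epsilon$, say $w = v\epsilon$ with $v \in V$. Then $w\epsilon = (v\epsilon)\epsilon = v\epsilon^2 = v\epsilon = w$, so $\epsilon$ fixes every vector of $V\epsilon$ pointwise; that is, $\epsilon|_{V\epsilon} = 1_{V\epsilon}$. For the converse, suppose $\epsilon|_{V\epsilon} = 1_{V\epsilon}$. For any $v \in V$, the vector $v\epsilon$ lies in $V\epsilon$, so by hypothesis $(v\epsilon)\epsilon = v\epsilon$; thus $v\epsilon^2 = v\epsilon$ for all $v \in V$, which is exactly $\epsilon^2 = \epsilon$.

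The only point worth a remark is that nothing here uses the condition $\epsilon|_U \in GL(U)$ beyond ensuring $\epsilon \in L(V)$ is a genuine element of the semigroup; the argument is identical to the one for $L(V)$ and, more generally, for any $T(X)$. So there is essentially no obstacle — the lemma is a one-line unwinding of associativity of composition together with the fact that the image of $\epsilon$ consists precisely of the vectors $v\epsilon$. The main thing to be careful about is simply the direction in which functions act (here on the right, consistent with the paper's convention $\alpha = \binom{e_i}{a_i}$), so that $v\epsilon^2$ means $(v\epsilon)\epsilon$ and not $\epsilon(\epsilon(v))$ read the other way; with the right-action convention in force the computation reads cleanly.
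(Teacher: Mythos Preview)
Your proof is correct and follows essentially the same argument as the paper's own proof: both directions are handled by the identical elementwise computations $w\epsilon = v\epsilon^2 = v\epsilon = w$ and $v\epsilon^2 = (v\epsilon)\epsilon = v\epsilon$. Your additional remark that the hypothesis $\epsilon|_U \in GL(U)$ plays no role is accurate and worth noting, though the paper does not make this explicit.
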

\begin{proof}
	Suppose that $\epsilon\in L_{GL(U)}(V)$ is an idempotent. Let $w\in V\epsilon$. Then $w=v\epsilon$ for some $v\in V$. We obtain $w\epsilon=v\epsilon^2=v\epsilon=w$. Conversely, assume that $\epsilon|_{V\epsilon}$ is the identity map $1_{V\epsilon}$. Let $v\in V$. Then $v\epsilon\in V\epsilon$ which implies that $v\epsilon^2=(v\epsilon)\epsilon=v\epsilon$. Thus $\epsilon$ is an idempotent.
\end{proof}

By the above lemma, the set of all idempotents in $L_{GL(U)}(V)$ is $E(L_{GL(U)}(V))=\{\epsilon\in L_{GL(U)}(V):\epsilon|_{V\epsilon}=1_{V\epsilon}\}$.

\begin{theorem}
	Let $M=\{\alpha\in E(L_{GL(U)}(V)):V\alpha=U\}$. Then $M$ is the set of all minimal idempotents in $L_{GL(U)}(V)$.
\end{theorem}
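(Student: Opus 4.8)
The plan is to show two inclusions: every element of $M$ is a minimal idempotent, and every minimal idempotent lies in $M$. For the first direction, I would take $\alpha \in M$ and suppose, for contradiction, that there is an idempotent $f \in E(L_{GL(U)}(V))$ with $f < \alpha$, i.e.\ $f = f\alpha = \alpha f$ and $f \neq \alpha$. Since $V\alpha = U$ and $f = f\alpha$, we get $Vf = Vf\alpha \subseteq V\alpha = U$; on the other hand $f|_U \in GL(U)$ forces $U = Uf \subseteq Vf$, so $Vf = U = V\alpha$. Using Lemma~\ref{lem: idempotent}, both $\alpha|_U$ and $f|_U$ are the identity $1_U$. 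Then for any $v \in V$ we have $vf = (vf)\alpha$ (from $f = f\alpha$) and $vf \in U$, so $\alpha$ acts as identity on $vf$; similarly $v\alpha = (v\alpha)f$ with $v\alpha \in U$ and $f$ acting as identity there. I would combine these to show $vf = v\alpha$ for all $v$, contradicting $f \neq \alpha$; concretely, write $v\alpha = v\alpha f = v f$ using $\alpha f = f$ in the form applied on the right, and symmetrically, to pin down equality. Hence no such $f$ exists and $\alpha$ is minimal.

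For the converse, I would take a minimal idempotent $\epsilon \in E(L_{GL(U)}(V))$ and show $V\epsilon = U$. The inclusion $U \subseteq V\epsilon$ is automatic since $\epsilon|_U \in GL(U)$ gives $U = U\epsilon \subseteq V\epsilon$. For the reverse inclusion, suppose $V\epsilon \supsetneq U$. Then $\dim(V\epsilon/U) \geq 1$, so I would construct an idempotent $f \in L_{GL(U)}(V)$ with $f < \epsilon$ and $Vf = U$, contradicting minimality. The natural candidate: pick a decomposition $V\epsilon = U \oplus W$ with $W \neq \{0\}$, and use the structure $V = \ker\epsilon \oplus V\epsilon$ (available since $\epsilon$ restricted to a complement of its kernel is an isomorphism onto $V\epsilon$, which is the meaning of the decomposition convention stated in the Preliminaries). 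Write bases $\ker\epsilon = \langle v_r \rangle$, $U = \langle u_k \rangle$, $W = \langle w_i \rangle$, so that by Lemma~\ref{lem: idempotent}, $\epsilon = \begin{pmatrix} v_r & u_k & w_i \\ 0 & u_k & w_i \end{pmatrix}$. Define $f = \begin{pmatrix} v_r & u_k & w_i \\ 0 & u_k & 0 \end{pmatrix}$. Then $f|_U = 1_U \in GL(U)$, so $f \in L_{GL(U)}(V)$; $Vf = U$; and $f^2 = f$ is immediate. A direct check of the products shows $\epsilon f = f = f \epsilon$ (each basis vector $v_r \mapsto 0$, $u_k \mapsto u_k$, $w_i \mapsto 0$ under both composites), and $f \neq \epsilon$ since $w_i \epsilon = w_i \neq 0 = w_i f$. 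This contradicts minimality of $\epsilon$, so $V\epsilon = U$ and $\epsilon \in M$.

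The main obstacle I anticipate is making the contradiction in the first direction fully rigorous: from $f = f\alpha = \alpha f$ with $Vf = V\alpha = U$ and both maps acting as the identity on $U$, one must genuinely deduce $f = \alpha$, not merely that they agree on $U$. The clean way is to use all of $f = \alpha f$ and $f = f \alpha$ together. From $f = \alpha f$: for all $v$, $vf = (v\alpha)f$; and from $f = f\alpha$: $vf = (vf)\alpha$. I would also invoke $\alpha = \alpha$ being the larger idempotent only through the relations $f\alpha = f$ and $\alpha f = f$, so I should instead argue: since $f \leq \alpha$ in $E(S)$ and $\alpha, f$ are $\mathscr{H}$-related would force $f = \alpha$ (idempotents in one $\mathscr{H}$-class coincide); and indeed $V\alpha = Vf = U$ and $\ker f = \ker\alpha$ can be checked — $\ker\alpha \subseteq \ker f$ from $f = f\alpha$ is false in general, so instead I verify $\ker f \subseteq \ker \alpha$ and dimension count, or more simply note $f = \alpha f$ and $f = f \alpha$ with $\alpha|_U = 1_U$ give, for $v \in \ker f$: $0 = vf = vf\alpha$, trivial; better to use $\ker \alpha \subseteq \ker f$: if $v\alpha = 0$ then $vf = v\alpha f = 0$. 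Combined with $V\alpha = Vf$, rank-nullity (the first isomorphism theorem, as used in the proof of Theorem~\ref{thm: Green's relations on LGL(U)}) gives $\ker\alpha = \ker f$, hence $\alpha \mathscr{H} f$, hence $\alpha = f$, the contradiction. This $\mathscr{H}$-class argument is the cleanest route and I would present the first direction that way.
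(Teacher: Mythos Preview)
Your second direction (constructing the idempotent $f$ with $Vf=U$ from a basis decomposition $V=\ker\epsilon\oplus U\oplus W$) is exactly the paper's argument. Your first paragraph also already contains the paper's proof of the first direction: from $\alpha f=f$ and $v\alpha\in U$ together with $f|_U=1_U$ (which you correctly derived from Lemma~\ref{lem: idempotent}), one gets $v\alpha=(v\alpha)f=v(\alpha f)=vf$ for every $v\in V$, hence $\alpha=f$. That one line is complete; no ``symmetrically'' is needed, and no contradiction framing is required.

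The $\mathscr{H}$-class route you propose in the last paragraph as ``the cleanest route'', however, has a genuine gap in the generality of the theorem (no finite-dimension hypothesis is in force in this section). From $\ker\alpha\subseteq\ker f$ and $V\alpha=Vf=U$ you cannot conclude $\ker\alpha=\ker f$: the first isomorphism theorem only gives $V/\ker\alpha\cong U\cong V/\ker f$, and when $U$ is infinite-dimensional a strict inclusion $\ker\alpha\subsetneq\ker f$ is compatible with this (a surjective linear map $U\to U$ need not be injective). So the ``rank-nullity'' step fails, and you do not get $\alpha\,\mathscr{H}\,f$. Drop that detour and present the direct computation from your first paragraph; it is shorter, works in all dimensions, and is precisely what the paper does.
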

\begin{proof}
	Let $\alpha\in M$ and $\beta\in E(L_{GL(U)}(V))$ be such that $\beta\leq\alpha$. Then $\beta=\beta\alpha=\alpha\beta$. To show that $\alpha=\beta$, let $v\in V$. Then $v\beta=v\alpha\beta=v\alpha$ since $v\alpha\in U\subseteq V\beta$ and $\beta$ is idempotent. Hence $\alpha$ is minimal.

	Conversely, let $\alpha$ be a minimal idempotent. By Lemma \ref{lem: idempotent}, we can write
	\[
		\alpha= \left(
		\begin{array}{ccc}
		v_i & v_j & u_k\\
		0   & v_j & u_k\\ 
		\end{array}
		\right)
	\]
	where $\ker\alpha=\langle v_i\rangle$, $V\alpha=\langle v_j\rangle\oplus U=\langle v_j\rangle\oplus\langle u_k\rangle$. Define an idempotent $\epsilon$ by
	\[
		\epsilon= \left(
		\begin{array}{ccc}
		v_i & v_j & u_k\\
		0   & 0   & u_k\\ 
		\end{array}
		\right).
	\]
	It is straightforward to verify that $\epsilon=\epsilon\alpha=\alpha\epsilon$. Hence $\epsilon\leq\alpha$ and so $\alpha=\epsilon$ since $\alpha$ is minimal. We have $V\alpha=V\epsilon=U$. Therefore, $\alpha\in M$.
\end{proof}

By the above proposition, if $\epsilon$ is a minimal idempotent in $L_{GL(U)}(V)$, we can write
\[
	\epsilon= \left(
	\begin{array}{cc}
	v_i & u_k\\
	0   & u_k\\ 
	\end{array}
	\right)
\]
where $\ker\epsilon=\langle v_i\rangle$ and $U=\langle u_k\rangle$. Obviously, the minimal ideal $Q(1)$ contains the set of all minimal idempotents $M$. Moreover, we can see that the number of elements in $M$ is equal to the possibility of the kernel of element which is a complement of $U$ in $V$. It follows that the cardinality of $M$ is the number of all complements of $U$ in $V$.

Let $V_1$ and $V_2$ be vector spaces over a field $\mathbb{F}$ and let $U_1$ and $U_2$ be subspaces of $V_1$ and $V_2$, respectively. If $L_{GL(U_1)}(V_1)$ and $L_{GL(U_2)}(V_2)$ are isomorphic, then the cardinality of the set of all minimal idempotents of these two semigroups are equal. Hence the number of all complements of $U_1$ in $V_1$ and the number of all complements of $U_2$ in $V_2$ are the same. 

For the finite case, according to \cite[Theorem 6]{Tingley}, the author counted the number of complements of a subspace in a vector space as follows.

\begin{theorem}\cite[Theorem 6]{Tingley}\label{thm: the number of complements}
	Let $V$ be an $n$-dimensional vector space over a finite field $\mathbb{F}$. Let $U$ be any $k$-dimensional subspace of $V$. Then there are $|\mathbb{F}|^{k(n-k)}$ distinct complements of $U$ in $V$.
\end{theorem}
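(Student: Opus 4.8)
The plan is to build an explicit bijection between the set of complements of $U$ in $V$ and the space $\mathrm{Hom}(W_0,U)$ of linear maps from one fixed complement $W_0$ into $U$, and then count the latter. First I would use the fact (implicit throughout the paper) that $U$ has at least one complement: extend a basis $\{u_1,\dots,u_k\}$ of $U$ to a basis $\{u_1,\dots,u_k,w_1,\dots,w_{n-k}\}$ of $V$ and set $W_0=\langle w_1,\dots,w_{n-k}\rangle$, so that $V=U\oplus W_0$.

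For the forward direction, given $\phi\in\mathrm{Hom}(W_0,U)$ I would consider its graph $W_\phi=\{w+w\phi:w\in W_0\}$. It is routine to check that $W_\phi$ is a subspace, that $W_\phi\cap U=\{0\}$ (a vector $w+w\phi\in U$ forces $w\in U\cap W_0=\{0\}$), and that $U+W_\phi=V$ (since $w=(w+w\phi)-w\phi\in U+W_\phi$ for every $w\in W_0$ and $U\oplus W_0=V$). Hence $W_\phi$ is a complement of $U$, and $\phi\mapsto W_\phi$ is a well-defined map into the set of complements.

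Injectivity and surjectivity are the substantive steps. For injectivity, if $W_\phi=W_\psi$ then for each $w\in W_0$ we have $w+w\phi=w'+w'\psi$ for some $w'\in W_0$; applying the projection of $V=U\oplus W_0$ onto $W_0$ gives $w=w'$, hence $w\phi=w\psi$ and $\phi=\psi$. For surjectivity, let $W$ be any complement of $U$, and let $p\colon V\to W_0$ be the projection with kernel $U$ arising from $V=U\oplus W_0$. Its restriction $p|_W$ has kernel $W\cap U=\{0\}$ and domain of dimension $n-k=\dim W_0$, hence is an isomorphism $W\to W_0$. Writing $\theta=(p|_W)^{-1}\colon W_0\to W$ and $\phi(w)=\theta(w)-w$, one checks that $\phi(w)\in\ker p=U$, that $\phi$ is linear, and that $W_\phi=\{\theta(w):w\in W_0\}=W$.

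Once the bijection is established, the count is immediate: a map in $\mathrm{Hom}(W_0,U)$ is determined by the images of the $n-k$ basis vectors $w_1,\dots,w_{n-k}$, each chosen freely among the $|\mathbb{F}|^k$ vectors of $U$, giving $\bigl(|\mathbb{F}|^k\bigr)^{n-k}=|\mathbb{F}|^{k(n-k)}$ complements; the extreme cases $k=0$ and $k=n$ are consistent, yielding the single complement $V$ (resp. $\{0\}$). I expect the only point requiring genuine care is the surjectivity argument — recovering the correct linear map $\phi$ from an abstract complement $W$ — while the rest is bookkeeping with the direct sum $V=U\oplus W_0$.
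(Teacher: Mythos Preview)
Your argument is correct. Note, however, that the paper does not supply its own proof of this statement: Theorem~\ref{thm: the number of complements} is quoted from Tingley and used as a black box. The paper does sketch Tingley's parametrisation later in Section~\ref{sec: Generating sets}, where it records that once a complement $W=\langle w_1,\dots,w_{n-r}\rangle$ is fixed, the complements of $U$ are in one-to-one correspondence with $(n-r)$-tuples $(x_1,\dots,x_{n-r})$ with $x_i\in w_i+U$. Your bijection with $\mathrm{Hom}(W_0,U)$ is exactly this parametrisation in coordinate-free language: the tuple $(w_i+u'_i)_i$ corresponds to the linear map $\phi$ sending $w_i\mapsto u'_i$, and the associated complement $\langle w_1+u'_1,\dots,w_{n-r}+u'_{n-r}\rangle$ is your graph $W_\phi$. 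What you add beyond the paper's summary is the explicit verification of surjectivity via the projection $p\colon V\to W_0$ with kernel $U$, which is the clean way to recover $\phi$ from an arbitrary complement; the paper simply asserts the bijection.
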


Using the above theorem, we are in position to prove an isomorphism theorem for the finite case.

\begin{theorem}
	Let $V_1$ and $V_2$ be finite dimensional vector spaces over a finite field $\mathbb{F}$ and let $U_1$ and $U_2$ be subspaces of $V_1$ and $V_2$, respectively. Then $L_{GL(U_1)}(V_1)$ and $L_{GL(U_2)}(V_2)$ are isomorphic if and only if there is an isomorphism $\phi:V_1\to V_2$ such that $U_1\phi=U_2$.
\end{theorem}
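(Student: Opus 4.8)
The plan is to prove the two implications separately. The backward direction is the familiar conjugation argument. For the forward direction I would \emph{not} build $\phi$ directly from the semigroup isomorphism; instead I would extract from it just enough numerical data to force $\dim U_1=\dim U_2$ and $\dim V_1=\dim V_2$, and then produce $\phi$ by elementary linear algebra.

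For the backward direction, suppose $\phi\colon V_1\to V_2$ is an isomorphism with $U_1\phi=U_2$. I would set $\Phi(\alpha)=\phi^{-1}\alpha\phi$ and check the routine points: $\phi^{-1}\alpha\phi\in L(V_2)$; its restriction to $U_2$ is the composite of the bijections $\phi^{-1}\colon U_2\to U_1$, $\alpha|_{U_1}\in GL(U_1)$ and $\phi\colon U_1\to U_2$ (here $U_1\phi=U_2$ is used), hence lies in $GL(U_2)$, so $\Phi$ maps into $L_{GL(U_2)}(V_2)$; $\Phi(\alpha\beta)=\phi^{-1}\alpha\beta\phi=(\phi^{-1}\alpha\phi)(\phi^{-1}\beta\phi)=\Phi(\alpha)\Phi(\beta)$; and $\beta\mapsto\phi\beta\phi^{-1}$ is a two-sided inverse of $\Phi$. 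Thus $\Phi$ is an isomorphism.

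For the forward direction, let $\Psi\colon L_{GL(U_1)}(V_1)\to L_{GL(U_2)}(V_2)$ be an isomorphism, and write $n_i=\dim V_i$, $k_i=\dim U_i$. Since $U_i\neq V_i$ and $V_i$ is finite dimensional, $d_i:=n_i-k_i=\dim(V_i/U_i)\geq 1$. First, $\Psi$ carries $\mathscr{J}$-classes bijectively onto $\mathscr{J}$-classes, so by the enumeration of $\mathscr{J}$-classes following Theorem \ref{thm: Green's relations on LGL(U)} the two semigroups have the same number of $\mathscr{J}$-classes, giving $d_1=d_2=:d$. Second, $\Psi$ restricts to an order-isomorphism of the idempotents with respect to the natural partial order (the relation $e\leq f$ is phrased purely in terms of multiplication), hence maps the minimal idempotents of the first semigroup onto those of the second; as observed in this section the number of minimal idempotents of $L_{GL(U_i)}(V_i)$ equals the number of complements of $U_i$ in $V_i$, which by Theorem \ref{thm: the number of complements} is $|\mathbb{F}|^{k_i(n_i-k_i)}=|\mathbb{F}|^{k_id}$. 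Therefore $|\mathbb{F}|^{k_1d}=|\mathbb{F}|^{k_2d}$, and since $|\mathbb{F}|\geq 2$ and $d\geq 1$ this forces $k_1=k_2$, whence $n_1=k_1+d=k_2+d=n_2$.

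It then remains only to construct $\phi$. With $\dim U_1=\dim U_2$ and $\dim V_1=\dim V_2$, I would take bases $U_1=\langle e_k\rangle$, $U_2=\langle f_k\rangle$ indexed by a common set, pick complements $W_i$ of $U_i$ in $V_i$ with bases $W_1=\langle g_l\rangle$, $W_2=\langle h_l\rangle$ indexed by a common set (possible since $\dim W_1=d=\dim W_2$), and define $\phi\colon V_1\to V_2$ by $e_k\phi=f_k$, $g_l\phi=h_l$, extended linearly; this sends a basis of $V_1$ to a basis of $V_2$, so it is an isomorphism, and $U_1\phi=\langle f_k\rangle=U_2$. I expect the only delicate point to be the bookkeeping in the forward direction: a single invariant does not suffice (the minimal-idempotent count by itself gives merely $k_1d_1=k_2d_2$), and it is its combination with the $\mathscr{J}$-class count, together with the strict inequality $d\geq 1$ — which is exactly where the standing hypothesis $U\neq V$ enters — that simultaneously pins down $\dim U$ and $\dim V$; note also that the isomorphism $\Psi$ is used only to harvest these two numbers, after which $\phi$ is built by hand rather than transported from $\Psi$.
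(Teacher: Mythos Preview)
Your proof is correct and follows essentially the same route as the paper: both directions match, with the forward direction combining the $\mathscr{J}$-class count (giving $\dim(V_1/U_1)=\dim(V_2/U_2)$) with the minimal-idempotent/complement count via Theorem~\ref{thm: the number of complements} to force $\dim U_1=\dim U_2$ and $\dim V_1=\dim V_2$, after which $\phi$ is built by hand. Your explicit remark that $d\geq 1$ (from the standing hypothesis $U\neq V$) is needed to cancel the exponents is a point the paper leaves implicit.
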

\begin{proof}
	Assume that $L_{GL(U_1)}(V_1)$ and $L_{GL(U_2)}(V_2)$ are isomorphic. As we mentioned before, the number of all complements of $U_1$ in $V_1$ and the number of all complements of $U_2$ in $V_2$ are the same. Let $\dim V_1=m$, $\dim V_2=n$, $\dim U_1=p$ and $\dim U_2=q$. By Theorem \ref{thm: the number of complements}, we have $|\mathbb{F}|^{p(m-p)}=|\mathbb{F}|^{q(n-q)}$. Furthermore, the number of $\mathcal{J}$-classes of $L_{GL(U_1)}(V_1)$ and $L_{GL(U_2)}(V_2)$ are $\dim(V_1/U_1)=m-p$ and $\dim(V_2/U_2)=n-q$, respectively. Hence $m-p=n-q$ since these two semigroups are isomorphic. Thus $p=q$ and $m=n$. We can write $V_1=\langle v_1,v_2,\ldots,v_{m-p}\rangle\oplus\langle u_1,u_2,\ldots,u_p\rangle$ and $V_2=\langle v'_1,v'_2,\ldots,v'_{m-p}\rangle\oplus\langle u'_1,u'_2,\ldots,u'_p\rangle$ where $U_1=\langle u_1,u_2,\ldots,u_p\rangle$ and $U_2=\langle u'_1,u'_2,\ldots,u'_p\rangle$. Define a linear transformation $\phi: V_1\to V_2$ by
	\[
		\phi= \left(
		\begin{array}{cccccccc}
		v_1 & v_2 & \cdots & v_{m-p} & u_1 & u_2 & \cdots & u_p\\
		v'_1 & v'_2 & \cdots & v'_{m-p} & u'_1 & u'_2 & \cdots & u'_p\\ 
		\end{array}
		\right).
	\]
	It is routine matter to verify that $\phi$ is an isomorphism such that $U_1\phi=U_2$.

	Conversely, suppose there is an isomorphism $\phi:V_1\to V_2$ such that $U_1\phi=U_2$. Define a function $\Psi:L_{GL(U_1)}(V_1)\to L_{GL(U_2)}(V_2)$ by
	\[
		\alpha\Psi=\phi^{-1}\alpha\phi\ \text{for all}\ \alpha\in L_{GL(U_1)}(V_1).
	\]
	It is straightforward to show that $\Psi$ is an isomorphism.
\end{proof}

%%%%%%%%%%%%%%%%%%%%%%%%%Generating sets%%%%%%%%%%%%%%%%%%%%%%%%%%%%%%%%%%%%%%%%%%%%%%%%%%%%%%%%%%%%%%%%%%%%%%%%%%%%%%%%%%%%%%%%%%%%%%%

\section{Generating sets}\label{sec: Generating sets}

Recall that a \textit{generating set} of a semigroup $S$ is a subset $G$ of $S$ such that every element in $S$ can be expressed as a finite product of elements from $G$. In other words, if we take any element in the semigroup $S$, we can write it as a combination of elements from the generating set $G$, using the operation of the semigroup. If $G$ is a generating set of $S$, then we write $S=\langle G\rangle$. For any semigroup $S$, the \textit{rank} of $S$, denoted by $\rank(S)$, is the smallest cardinality of a generating set of $S$. In other words, 
\[
	\rank(S)=\min\{|G|:G\subseteq S\ \text{and}\ S=\langle G\rangle\}.
\] 
In this section, we will describe a generating set and rank of $L_{GL(U)}(V)$.

In what follows, we assume that $V$ is an $n$-dimensional vector space over a finite field $\mathbb{F}$ and $U$ is an $r$-dimensional subspace of $V$. From now on, we write
\[
	U=\langle u_1,u_2,\ldots,u_r\rangle.
\]

To find a generating set of $L_{GL(U)}(V)$, the following lemmas will be useful.

\begin{lemma}\label{lem: J(k) generated by J(k+1)}
	Let $0\leq k\leq n-r-2$ and $\alpha\in J(k)$. Then $\alpha=\lambda\mu$ for some $\lambda,\mu\in J(k+1)$. 
\end{lemma}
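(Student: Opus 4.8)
The plan is to write $\alpha\in J(k)$ in the standard matrix notation, with $\ker\alpha=\langle v_i\rangle$ and $V\alpha=\langle w_1\alpha,\ldots,w_k\alpha\rangle\oplus U$, so that
\[
\alpha= \left(
\begin{array}{ccc}
v_i & w_1,\ldots,w_k       & u_j\\
0   & w_1\alpha,\ldots,w_k\alpha & u_j\alpha\\
\end{array}
\right),
\]
and then build $\lambda,\mu\in J(k+1)$ with $\lambda\mu=\alpha$ by ``splitting off'' one extra dimension. Since $k\le n-r-2$, we have $\dim(V/U)=n-r\ge k+2$, so there is genuine room: the kernel $\langle v_i\rangle$ has dimension $n-r-k\ge 2$, hence we may single out one basis vector, say $v_1$, from among the $v_i$, writing $\ker\alpha=\langle v_1\rangle\oplus\langle v_i'\rangle$ with the $v_i'$ indexing the remaining kernel basis vectors.

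The idea for $\lambda$ is to make it agree with $\alpha$ on the $w$'s and on $U$, but send $v_1$ to a \emph{new} vector outside $V\alpha$ rather than to $0$, thereby raising the corank by one; and for $\mu$ to collapse that new direction back down while acting as an isomorphism on $V\alpha$ together with its complement. Concretely, fix a complement so that $V=\ker\alpha\oplus\langle w_i\rangle\oplus U$ is refined appropriately; define
\[
\lambda= \left(
\begin{array}{cccc}
v_1 & v_i' & w_i       & u_j\\
w_1\alpha & 0 & w_i\alpha & u_j\\
\end{array}
\right),
\]
so that $V\lambda=\langle w_1\alpha\rangle\oplus\langle w_i\alpha\rangle\oplus U$ and $\dim(V\lambda/U)=k+1$, hence $\lambda\in J(k+1)$, and clearly $\lambda|_U=\alpha|_U\in GL(U)$. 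For $\mu$ one needs a map that on $V\lambda$ reproduces the effect of $\alpha$: since $V=V\lambda\oplus\langle z\rangle$ for some one-dimensional $\langle z\rangle$ (as $\dim V\lambda=r+k+1\le n-1$), set $\mu$ to send $w_i\alpha\mapsto w_i\alpha$, $u_j\mapsto u_j\alpha$, $w_1\alpha\mapsto 0$, and $z\mapsto$ some vector keeping $\dim(V\mu/U)=k+1$; then $\mu\in J(k+1)$, $\mu|_U\in GL(U)$, and a direct check on the basis $\{v_1,v_i',w_i,u_j\}$ of $V$ gives $v_1\lambda\mu = w_1\alpha\cdot\mu = 0 = v_1\alpha$ — wait, that is wrong, so instead $w_1$ must be the vector pulled up: it is cleaner to take $\lambda$ to send $w_1\mapsto$ (new vector $t$ outside $V\alpha$) and $\mu$ to send $t\mapsto w_1\alpha$, $w_i\alpha\mapsto w_i\alpha$ for $i\ge2$, $u_j\mapsto u_j\alpha$, $v_1\mapsto 0$, which restores $\lambda\mu=\alpha$ on every basis vector while both factors sit in $J(k+1)$.

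The main obstacle is purely bookkeeping: one must choose the complements in $V$ consistently on both sides so that the composite $\lambda\mu$ really equals $\alpha$ on \emph{all} of $V$ — in particular checking the action on the $v_i'$, on $w_1$, on the remaining $w_i$, and on $U$ — and simultaneously verify that the coranks of $\lambda$ and $\mu$ are each exactly $k+1$ (using $k+1\le n-r-1$, i.e. $k\le n-r-2$, so that $J(k+1)$ is a legitimate nonempty $\mathscr{J}$-class and the required ``extra'' independent vector exists). I would present the two matrices explicitly and then say ``it is straightforward to verify that $\lambda,\mu\in J(k+1)$ and $\alpha=\lambda\mu$,'' mirroring the style of Lemma \ref{lem: pre ideal}. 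No single computation is deep; the care lies in the choice of bases.
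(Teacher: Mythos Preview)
Your overall strategy---raise the rank of $\alpha$ by one on each side via an auxiliary vector outside $V\alpha$---is exactly what the paper does, but both of your concrete attempts at $\lambda$ fail to land in $J(k+1)$.

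In your first matrix for $\lambda$ you send $v_1\mapsto w_1\alpha$. But $w_1\alpha$ already lies in $V\alpha=\langle w_1\alpha,\dots,w_k\alpha\rangle\oplus U$, so the image of $\lambda$ is still $\langle w_1\alpha,\dots,w_k\alpha\rangle\oplus U$ and $\dim(V\lambda/U)=k$, not $k+1$; your displayed ``direct sum'' $\langle w_1\alpha\rangle\oplus\langle w_i\alpha\rangle$ is not direct. In your revised version you instead redirect $w_1\mapsto t$ with $t\notin V\alpha$; but that merely \emph{replaces} the image vector $w_1\alpha$ by $t$, so again $\dim(V\lambda/U)=k$. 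Neither version raises the rank. (Your specification of $\mu$ is also tangled: you list $v_1$ among the domain basis vectors of $\mu$, but $v_1$ is a basis vector on the $\lambda$ side and there is no reason it should be independent of your chosen basis for the domain of $\mu$.)

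The fix---and this is precisely the paper's construction---is to send a \emph{kernel} vector of $\alpha$ to a vector \emph{outside} $V\alpha$. Extend $V\alpha$ to $V$ by a complement $\langle z_1,\dots,z_{n-r-k}\rangle$ (note $n-r-k\ge 2$ by hypothesis). Define $\lambda$ to agree with $\alpha$ everywhere except that one kernel basis vector, say $v_1$, is sent to $z_1$; then $V\lambda=V\alpha\oplus\langle z_1\rangle$ and $\lambda\in J(k+1)$. Define $\mu$ on the basis $\{w_i\alpha,\,u_j\alpha,\,z_1,\dots,z_{n-r-k}\}$ of $V$ by fixing each $w_i\alpha$ and each $u_j\alpha$, sending $z_1\mapsto 0$, $z_2\mapsto z_2$, and $z_l\mapsto 0$ for $l\ge 3$. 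Then $\mu|_U=1_U$, $V\mu=V\alpha\oplus\langle z_2\rangle$, so $\mu\in J(k+1)$, and one checks on the basis $\{v_1,v_i',w_i,u_j\}$ that $\lambda\mu=\alpha$. The hypothesis $k\le n-r-2$ is used exactly here: you need \emph{two} independent vectors $z_1,z_2$ in the complement of $V\alpha$, one to enlarge $V\lambda$ and a different one to enlarge $V\mu$.
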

\begin{proof}
	We can write
	\[
		\alpha= \left(
		\begin{array}{cccccccccccc}
		v_1 & v_2 & \cdots & v_k & v_{k+1} & v_{k+2} & \cdots & v_{n-r} & u_1 & u_2 & \cdots & u_r\\
		v_1\alpha & v_2\alpha & \cdots & v_k\alpha & 0 & 0 & \cdots & 0 & u_1\alpha & u_2\alpha & \cdots & u_r\alpha\\ 
		\end{array}
		\right)
	\]
	where $\ker\alpha=\langle v_{k+1},v_{k+2},\ldots,v_{n-r}\rangle$ and $V\alpha=\langle v_1\alpha,v_2\alpha,\ldots,v_k\alpha\rangle\oplus U$. Let
	\[
		V=\langle v_1\alpha,v_2\alpha,\ldots,v_k\alpha\rangle\oplus U\oplus\langle w_1,w_2,\ldots,w_{n-r-k}\rangle.
	\]
	We note that $\dim(\ker\alpha)=\dim\langle w_1,w_2,\ldots,w_{n-r-k}\rangle\geq 2$ since $k\leq n-r-2$. Define $\lambda,\mu\in L_{GL(U)}(V)$ by
	\[
		\lambda= \left(
		\begin{array}{cccccccccccc}
		v_1 & v_2 & \cdots & v_k & v_{k+1} & v_{k+2} & \cdots & v_{n-r} & u_1 & u_2 & \cdots & u_r\\
		v_1\alpha & v_2\alpha & \cdots & v_k\alpha & w_1 & 0 & \cdots & 0 & u_1\alpha & u_2\alpha & \cdots & u_r\alpha\\ 
		\end{array}
		\right)
	\]
	and
	\[
		\mu= \left(
		\begin{array}{ccccccccccccc}
		v_1\alpha & v_2\alpha & \cdots & v_k\alpha & w_1 & w_2 & w_3 & \cdots & w_{n-r-k} & u_1\alpha & u_2\alpha & \cdots & u_r\alpha\\
		v_1\alpha & v_2\alpha & \cdots & v_k\alpha & 0 & w_2 & 0 & \cdots & 0 & u_1\alpha & u_2\alpha & \cdots & u_r\alpha\\ 
		\end{array}
		\right).
	\]
	It is easy to see that $\lambda,\mu\in J(k+1)$ and $\alpha=\lambda\mu$.
\end{proof}

Inductive application of Lemma \ref{lem: J(k) generated by J(k+1)} yields the following corollary.

\begin{corollary}\label{cor: Q(k+1)=<J(k)>}
	$Q(k+1)=\langle J(k)\rangle$ for all $1\leq k\leq n-r-1$.
\end{corollary}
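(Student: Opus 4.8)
The plan is to prove the identity $Q(k+1)=\langle J(k)\rangle$ for each $k$ with $1\le k\le n-r-1$ by establishing the two inclusions separately, with the nontrivial direction handled by a downward induction built directly on Lemma \ref{lem: J(k) generated by J(k+1)}.

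First I would dispose of the inclusion $\langle J(k)\rangle\subseteq Q(k+1)$. Since $J(k)\subseteq Q(k+1)$ (every $\alpha$ with $\dim(V\alpha/U)=k$ satisfies $\dim(V\alpha/U)<k+1$), and $Q(k+1)$ is an ideal of $L_{GL(U)}(V)$ by the ideal theorem of Section \ref{sec: Green's relations and ideals}, any finite product of elements of $J(k)$ again lies in $Q(k+1)$; hence $\langle J(k)\rangle\subseteq Q(k+1)$.

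For the reverse inclusion $Q(k+1)\subseteq\langle J(k)\rangle$, I would argue that every $\alpha\in Q(k+1)$ lies in some $J(j)$ with $0\le j\le k$, so it suffices to show $J(j)\subseteq\langle J(k)\rangle$ for all such $j$. This is where the inductive application of Lemma \ref{lem: J(k) generated by J(k+1)} comes in: one proves by downward induction on $j$, starting from $j=k$ and decreasing, that $J(j)\subseteq\langle J(k)\rangle$. The base case $j=k$ is trivial. For the inductive step, suppose $0\le j<k$ and $J(j+1)\subseteq\langle J(k)\rangle$; given $\alpha\in J(j)$, note that $j\le k-1\le n-r-2$ (using $k\le n-r-1$), so Lemma \ref{lem: J(k) generated by J(k+1)} applies and yields $\alpha=\lambda\mu$ with $\lambda,\mu\in J(j+1)\subseteq\langle J(k)\rangle$, whence $\alpha\in\langle J(k)\rangle$. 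This completes the induction and gives $\bigcup_{j=0}^{k}J(j)\subseteq\langle J(k)\rangle$; since $Q(k+1)=\bigcup_{j=0}^{k}J(j)$ (as $\dim V$ is finite), we conclude $Q(k+1)\subseteq\langle J(k)\rangle$, and combining the two inclusions finishes the proof.

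The only subtlety worth care — and the main thing to get right — is the bookkeeping on the index ranges: Lemma \ref{lem: J(k) generated by J(k+1)} requires $0\le j\le n-r-2$ for the $J(j)$ being decomposed, so one must verify that every index $j$ encountered in the downward induction (namely $0\le j\le k-1$) satisfies this bound, which follows from the hypothesis $k\le n-r-1$. No computation is needed beyond this; the substantive work is entirely contained in the already-proven Lemma \ref{lem: J(k) generated by J(k+1)}.
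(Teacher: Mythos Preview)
Your proposal is correct and follows exactly the approach the paper intends: the paper states only that the corollary is obtained by ``inductive application of Lemma~\ref{lem: J(k) generated by J(k+1)}'', and your argument makes that induction explicit, together with the easy inclusion $\langle J(k)\rangle\subseteq Q(k+1)$ coming from $Q(k+1)$ being an ideal. Your verification of the index bound $j\le k-1\le n-r-2$ needed to invoke the lemma is precisely the bookkeeping the paper leaves implicit.
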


\begin{lemma}\label{lem: J(n-r-1) subset J(n-r) alpha J(n-r)}
	$J(n-r-1)\subseteq J(n-r)\alpha J(n-r)$ for all $\alpha\in J(n-r-1)$
\end{lemma}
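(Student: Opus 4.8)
The plan is to show a double inclusion in the two obvious ways, but the statement as written only asserts one containment, so I would prove that every $\beta\in J(n-r-1)$ can be written as $\beta=\lambda\alpha\mu$ with $\lambda,\mu\in J(n-r)$. Since $J(n-r)$ is the top $\mathcal{J}$-class — it consists of the elements $\gamma$ with $V\gamma=W\oplus U$ for some $(n-r)$-dimensional complement $W$ of $U$, i.e.\ $V\gamma=V$, so $\gamma\in GL(V)\cap L_{GL(U)}(V)$ — this is really the claim that any two elements of $J(n-r-1)$ are connected by left and right multiplication by units of the semigroup that preserve $U$. First I would fix $\alpha\in J(n-r-1)$ and an arbitrary $\beta\in J(n-r-1)$ and write both in the $\binom{\cdot}{\cdot}$ form: choosing bases adapted to $U$ and to the (one-dimensional-codimension-in-$V/U$, i.e.\ $(n-r-1)$-dimensional complement) ranges,
\[
\alpha= \left(
\begin{array}{cccc}
v_1 & \cdots & v_{n-r-1} & v_{n-r}\\
v_1\alpha & \cdots & v_{n-r-1}\alpha & 0
\end{array}
\right)\quad(\text{mod }U),
\]
with $\ker\alpha=\langle v_{n-r}\rangle$ and $V\alpha=\langle v_i\alpha : i\le n-r-1\rangle\oplus U$, and similarly $\beta$ with its own data $\langle v'_j\rangle$, $v'_{n-r}$ spanning $\ker\beta$, and complement $\langle v'_j\beta\rangle$ of $U$ in $V\beta$.

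Next I would build $\lambda\in J(n-r)$ so that $\lambda$ maps the kernel-complement basis $\{v'_1,\dots,v'_{n-r}\}$ of $\beta$ onto a kernel-complement basis of $\alpha$, sending $\ker\beta=\langle v'_{n-r}\rangle$ into (a complement of $\ker\alpha$ avoiding) the right spot, while fixing $U$ pointwise (or acting as any prescribed element of $GL(U)$ — for safety I would just make $\lambda|_U=1_U$). Concretely, take $\lambda$ to carry $v'_i\mapsto v_i$ for $i=1,\dots,n-r$ and $u_k\mapsto u_k$; this $\lambda$ is a bijection of $V$ fixing $U$, hence $\lambda\in GL(V)\cap L_{GL(U)}(V)=J(n-r)$, and $\lambda\alpha$ now has kernel $\langle v'_{n-r}\rangle=\ker\beta$ and the same range as $\alpha$. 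Then I would choose $\mu\in J(n-r)$ to fix the correction on the range side: $\lambda\alpha$ sends $v'_i\mapsto v_i\alpha$ for $i\le n-r-1$, so I need $\mu$ invertible on $V$, fixing $U$, with $v_i\alpha\mapsto v'_i\beta$ for each $i\le n-r-1$ (and $u_k\mapsto u_k$, extended over a complement arbitrarily by bijectivity). Since $\{v_i\alpha\}_{i\le n-r-1}\cup\{u_k\}$ is linearly independent and can be completed to a basis of $V$, and likewise $\{v'_i\beta\}\cup\{u_k\}$, such a $\mu$ exists and lies in $GL(V)$, fixes $U$, hence $\mu\in J(n-r)$. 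A short check then gives $\lambda\alpha\mu=\beta$, so $\beta\in J(n-r)\alpha J(n-r)$, proving the inclusion.

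The one point needing care — and the main obstacle — is verifying that $J(n-r)=GL(V)\cap L_{GL(U)}(V)$, i.e.\ that $\dim(V\gamma/U)=n-r$ forces $V\gamma=V$ and hence $\gamma$ bijective; this is exactly where finite-dimensionality of $V$ is used (a proper subspace of the same dimension can only be avoided in infinite dimensions), and it also needs $\dim(V/U)=n-r$. With that in hand, the elements $\lambda,\mu$ constructed above automatically sit in $J(n-r)$, and the rest is the routine linear-algebra bookkeeping of extending linearly independent sets to bases and confirming the composite acts correctly on the chosen basis of $V$. I would also remark that the reverse inclusion $J(n-r)\alpha J(n-r)\subseteq J(n-r-1)$ is false in general (products drop dimension), so only the stated one-sided inclusion is intended, and it is the one relevant for showing $\langle J(n-r)\rangle=L_{GL(U)}(V)$ in the subsequent rank computation.
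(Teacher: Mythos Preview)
Your overall strategy matches the paper's exactly: write $\alpha$ and $\beta$ in adapted bases, take $\lambda\in J(n-r)$ sending the $\beta$-domain basis to the $\alpha$-domain basis, and take $\mu\in J(n-r)$ sending the $\alpha$-range basis to the $\beta$-range basis. The identification $J(n-r)=GL(V)\cap L_{GL(U)}(V)$ is also handled just as in the paper (and your caveat about finite-dimensionality is well placed).

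However, there is a genuine gap in your construction of $\mu$. You take $\lambda|_U=1_U$ and $\mu|_U=1_U$ simultaneously. Then for each basis vector $u_k$ of $U$,
\[
u_k(\lambda\alpha\mu)=(u_k\lambda)\alpha\mu=(u_k\alpha)\mu=u_k\alpha,
\]
since $u_k\alpha\in U$ and $\mu$ fixes $U$ pointwise. But $u_k\beta$ need not equal $u_k\alpha$; nothing forces $\alpha|_U=\beta|_U$. So your $\lambda\alpha\mu$ agrees with $\beta$ on the complement of $U$ and on the kernel, but not on $U$, and hence $\lambda\alpha\mu\neq\beta$ in general.

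The fix is exactly what the paper does: instead of $\mu(u_k)=u_k$, define $\mu$ on $U$ by $u_k\alpha\mapsto u_k\beta$. Since $\{u_k\alpha\}$ and $\{u_k\beta\}$ are both bases of $U$ (because $\alpha|_U,\beta|_U\in GL(U)$), this gives $\mu|_U\in GL(U)$, so $\mu\in L_{GL(U)}(V)$, and now $u_k(\lambda\alpha\mu)=(u_k\alpha)\mu=u_k\beta$ as required. Equivalently, you could keep $\mu|_U=1_U$ and instead set $\lambda|_U=(\alpha|_U)^{-1}(\beta|_U)$; either way, one of $\lambda,\mu$ must carry the $GL(U)$-correction.

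A minor aside: your final remark that the reverse inclusion $J(n-r)\alpha J(n-r)\subseteq J(n-r-1)$ is false is itself incorrect. Since $\lambda,\mu\in J(n-r)$ are automorphisms of $V$ with $U\mu=U$, one has $V(\lambda\alpha\mu)=(V\alpha)\mu$, which contains $U$ and has $\dim((V\alpha)\mu/U)=\dim(V\alpha/U)=n-r-1$; thus $J(n-r)\alpha J(n-r)\subseteq J(n-r-1)$ does hold, and in fact equality follows from the lemma.
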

\begin{proof}
	Let $\alpha\in J(n-r-1)$. Let $V\alpha=\langle v_1\alpha,v_2\alpha,\ldots,v_{n-r-1}\alpha\rangle\oplus U$ and $\ker\alpha=\langle v_{n-r}\rangle$. We can write
	\[
		\alpha= \left(
		\begin{array}{ccccccccc}
		v_1 & v_2 & \cdots & v_{n-r-1} & v_{n-r} & u_1 & u_2 & \cdots & u_r\\
		v_1\alpha & v_2\alpha & \cdots & v_{n-r-1}\alpha & 0 & u_1\alpha & u_2\alpha & \cdots & u_r\alpha\\ 
		\end{array}
		\right).
	\]
	Let $\beta\in J(n-r-1)$. Similar to $\alpha$, we can write
	\[
		\beta= \left(
		\begin{array}{ccccccccc}
		w_1 & w_2 & \cdots & w_{n-r-1} & w_{n-r} & u_1 & u_2 & \cdots & u_r\\
		w_1\beta & w_2\beta & \cdots & w_{n-r-1}\beta & 0 & u_1\beta & u_2\beta & \cdots & u_r\beta\\ 
		\end{array}
		\right).
	\]
	Let $V=\langle v_1\alpha,v_2\alpha,\ldots,v_{n-r-1}\alpha\rangle\oplus\langle w\rangle\oplus U=\langle w_1\beta,w_2\beta,\ldots,w_{n-r-1}\beta\rangle\oplus\langle w'\rangle\oplus U$. Define $\lambda,\mu\in L_{GL(U)}(V)$ by
	\[
		\lambda= \left(
		\begin{array}{ccccccccc}
		w_1 & w_2 & \cdots & w_{n-r-1} & w_{n-r} & u_1 & u_2 & \cdots & u_r\\
		v_1 & v_2 & \cdots & v_{n-r-1} & v_{n-r} & u_1 & u_2 & \cdots & u_r\\ 
		\end{array}
		\right)
	\]
	and
	\[
		\mu= \left(
		\begin{array}{ccccccccc}
		v_1\alpha & v_2\alpha & \cdots & v_{n-r-1}\alpha & w & u_1\alpha & u_2\alpha & \cdots & u_r\alpha\\
		w_1\beta & w_2\beta & \cdots & w_{n-r-1}\beta & w' & u_1\beta & u_2\beta & \cdots & u_r\beta\\ 
		\end{array}
		\right).
	\]
	It is straightforward to verify that $\beta=\lambda\alpha\mu$ and $\lambda,\mu\in J(n-r)$.
\end{proof}

\begin{lemma}\label{lem: upper bound rank}
	Let $\alpha\in J(n-r-1)$. Then $L_{GL(U)}(V)=\langle J(n-r)\cup\{\alpha\}\rangle$. Consequently, $\rank(L_{GL(U)}(V))\leq\rank(J(n-r))+1$.
\end{lemma}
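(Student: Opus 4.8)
The plan is to prove the first assertion by showing that the subsemigroup $G:=\langle J(n-r)\cup\{\alpha\}\rangle$ already exhausts $L_{GL(U)}(V)$, and then to read off the rank bound. Since $V$ is finite-dimensional with $\dim(V/U)=n-r$, and since $U=U\alpha\subseteq V\alpha\subseteq V$ forces $\dim(V\alpha/U)\in\{0,1,\ldots,n-r\}$ for every $\alpha\in L_{GL(U)}(V)$, the sets $J(0),J(1),\ldots,J(n-r)$ partition $L_{GL(U)}(V)$; combined with the identity $Q(k)=J(0)\,\dot\cup\,\cdots\,\dot\cup\,J(k-1)$ this gives the disjoint decomposition $L_{GL(U)}(V)=Q(n-r)\,\dot\cup\,J(n-r)$, where $Q(n-r)$ is the largest proper ideal. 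So it will suffice to establish $J(n-r)\subseteq G$ and $Q(n-r)\subseteq G$.

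The inclusion $J(n-r)\subseteq G$ is immediate from the definition of $G$. For the other one, I would apply Lemma \ref{lem: J(n-r-1) subset J(n-r) alpha J(n-r)} to the fixed element $\alpha$: it yields $J(n-r-1)\subseteq J(n-r)\,\alpha\,J(n-r)$, and every element on the right is a product of three elements of $G$ (two from $J(n-r)$, the middle one being $\alpha$), so $J(n-r-1)\subseteq G$. Since $G$ is a subsemigroup, $\langle J(n-r-1)\rangle\subseteq G$, and by Corollary \ref{cor: Q(k+1)=<J(k)>} applied with $k=n-r-1$ the left-hand side is exactly $Q(n-r)$; hence $Q(n-r)\subseteq G$. (If $n-r=1$ the corollary is vacuous, but in that case $Q(n-r)=Q(1)=J(0)=J(n-r-1)$, so the containment is already in hand.) Combining the two inclusions gives $L_{GL(U)}(V)=Q(n-r)\cup J(n-r)\subseteq G$, and the reverse inclusion is trivial, so $L_{GL(U)}(V)=\langle J(n-r)\cup\{\alpha\}\rangle$.

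For the consequence, I would fix a generating set $G_0$ of the semigroup $J(n-r)$ with $|G_0|=\rank(J(n-r))$. Then $\langle G_0\cup\{\alpha\}\rangle$ is a subsemigroup containing $\langle G_0\rangle=J(n-r)$ and containing $\alpha$, hence it contains $\langle J(n-r)\cup\{\alpha\}\rangle=L_{GL(U)}(V)$; therefore $G_0\cup\{\alpha\}$ generates $L_{GL(U)}(V)$ and has at most $\rank(J(n-r))+1$ elements, which is the claimed bound. I do not anticipate a genuine obstacle here: the two preceding lemmas carry essentially the whole argument, and the only points requiring a little care are the bookkeeping that $L_{GL(U)}(V)\setminus J(n-r)$ is precisely $Q(n-r)$, the harmless degenerate case $n-r=1$ (where one uses $Q(1)=J(0)$ in place of the corollary), and the remark that such an $\alpha$ exists because $J(n-r-1)\neq\emptyset$ whenever $U\neq V$.
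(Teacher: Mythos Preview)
Your proof is correct and follows essentially the same route as the paper: use Lemma~\ref{lem: J(n-r-1) subset J(n-r) alpha J(n-r)} to get $J(n-r-1)\subseteq\langle J(n-r)\cup\{\alpha\}\rangle$, then Corollary~\ref{cor: Q(k+1)=<J(k)>} to get $Q(n-r)\subseteq\langle J(n-r)\cup\{\alpha\}\rangle$, and conclude via the decomposition $L_{GL(U)}(V)=Q(n-r)\cup J(n-r)$. Your treatment is in fact slightly more careful than the paper's, since you explicitly handle the degenerate case $n-r=1$ (where the corollary is vacuous) and spell out the passage from the first assertion to the rank bound.
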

\begin{proof}
	By Corollary \ref{cor: Q(k+1)=<J(k)>} and Lemma \ref{lem: J(n-r-1) subset J(n-r) alpha J(n-r)}, we have $J(n-r-1)\subseteq J(n-r)\alpha J(n-r)\subseteq\langle J(n-r)\cup\{\alpha\}\rangle$ and $Q(n-r)=\langle J(n-r-1)\rangle\subseteq\langle J(n-r)\cup\{\alpha\}\rangle$. Thus
	\[
		L_{GL(U)}(V)=Q(n-r)\cup J(n-r)\subseteq\langle J(n-r)\cup\{\alpha\}\rangle\cup J(n-r)=\langle J(n-r)\cup\{\alpha\}\rangle.
	\]
	Therefore, $L_{GL(U)}(V)=\langle J(n-r)\cup\{\alpha\}\rangle$.
\end{proof}

We have $\langle J(n-r)\rangle=J(n-r)\neq L_{GL(U)}(V)$ and an element in $J(n-r)$ cannot be written as a product of some elements in $Q(n-r)$ since $Q(n-r)$ is an ideal. Thus $\rank(L_{GL(U)}(V))\geq\rank(J(n-r))+1$. Therefore, by Lemma \ref{lem: upper bound rank}, we obtain the following result immediately.

\begin{corollary}
	$\rank(L_{GL(U)}(V))=\rank(J(n-r))+1$.
\end{corollary}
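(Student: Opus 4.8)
The plan is to prove the equality by establishing the two opposite inequalities and combining them. The bound $\rank(L_{GL(U)}(V))\le\rank(J(n-r))+1$ is already available, being exactly the content of Lemma~\ref{lem: upper bound rank}: a minimal generating set of $J(n-r)$, enlarged by a single element $\alpha\in J(n-r-1)$, generates all of $L_{GL(U)}(V)$. Hence the substantive task is the reverse inequality $\rank(L_{GL(U)}(V))\ge\rank(J(n-r))+1$, after which the Corollary follows at once.

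For the lower bound I would take an arbitrary generating set $G$ of $L_{GL(U)}(V)$ and split it as $G=G_1\,\dot\cup\,G_2$ with $G_1=G\cap J(n-r)$ and $G_2=G\cap Q(n-r)$, using that $L_{GL(U)}(V)=J(n-r)\,\dot\cup\,Q(n-r)$ since $J(n-r)$ is the top $\mathcal J$-class. The key point is that a product $\gamma_1\gamma_2\cdots\gamma_m$ of elements of $L_{GL(U)}(V)$ can lie in $J(n-r)$ only if every factor lies in $J(n-r)$: from $\dim(V\gamma_1\cdots\gamma_m/U)=n-r$ and repeated application of Corollary~\ref{cor: dim inequality} we get $\dim(V\gamma_t/U)=n-r$ for each $t$. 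Consequently every element of $J(n-r)$ is a product of elements of $G_1$, so $J(n-r)\subseteq\langle G_1\rangle$; since $J(n-r)$ is a subsemigroup this forces $\langle G_1\rangle=J(n-r)$ and hence $|G_1|\ge\rank(J(n-r))$. Finally $Q(n-r)$ is nonempty (it contains $Q(1)$, which is nonempty because $U\neq V$) and, being an ideal, contains no product of elements of $G_1\subseteq J(n-r)$; since $G$ must generate $Q(n-r)$ too, we get $G_2\neq\emptyset$. Therefore $|G|=|G_1|+|G_2|\ge\rank(J(n-r))+1$, and taking the infimum over all generating sets $G$ gives the claimed bound.

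I do not anticipate a real obstacle: all the structural input — the ideal chain $Q(1)\subseteq Q(2)\subseteq\cdots$, Corollary~\ref{cor: dim inequality}, and Lemma~\ref{lem: upper bound rank} — is already in place, and the proof of the Corollary is merely the assembly of the two inequalities. The one step that genuinely needs care is the observation that a product landing in the top $\mathcal J$-class $J(n-r)$ must use only factors from $J(n-r)$; this is exactly where Corollary~\ref{cor: dim inequality} is used, and it is why a generator outside $J(n-r)$ is unavoidable. It is also worth remarking that the Corollary reduces the rank problem to computing $\rank(J(n-r))$, and that $J(n-r)=\{\alpha\in GL(V):U\alpha=U\}$ is the stabiliser of $U$ in $GL(V)$ — a maximal subgroup of $L_{GL(U)}(V)$ — so the remaining work is a purely group-theoretic rank computation.
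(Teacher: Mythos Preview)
Your proposal is correct and follows essentially the same route as the paper. The paper argues the lower bound in one sentence---since $Q(n-r)$ is an ideal, no element of $J(n-r)$ is a product involving a factor from $Q(n-r)$, so any generating set must contain a generating set of $J(n-r)$ together with at least one extra element---which is exactly your splitting $G=G_1\,\dot\cup\,G_2$ argument; your use of Corollary~\ref{cor: dim inequality} is just the explicit form of the ideal property the paper invokes.
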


To determine the rank of $L_{GL(U)}(V)$, the rank of $J(n-r)$ is needed. In the remaining part of this paper, we will find a generating set of $J(n-r)$ and collecting some properties of it. 

\begin{remark}
	For each $\alpha\in L_{GL(U)}(V)$, it is routine matter to verify that $\alpha\in J(n-r)$ if and only if $V\alpha=V$. Furthermore, $\alpha\in L_{GL(U)}(V)$ is an element of $J(n-r)$ if and only if it is an automorphism of $V$.
\end{remark}

Now, we first show that $J(n-r)$ is a subgroup of $L_{GL(U)}(V)$.

\begin{theorem}
	$J(n-r)$ is a subgroup of $L_{GL(U)}(V)$.
\end{theorem}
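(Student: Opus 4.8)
The plan is to use the description of $J(n-r)$ recorded in the preceding remark, namely
\[
	J(n-r)=\{\alpha\in L_{GL(U)}(V):\alpha\ \text{is an automorphism of}\ V\}=L_{GL(U)}(V)\cap GL(V),
\]
and simply verify the subgroup axioms inside the monoid $L_{GL(U)}(V)$, whose identity is $1_V$. First, $1_V$ is an automorphism of $V$ with $1_V|_U=1_U\in GL(U)$, so $J(n-r)$ is nonempty and contains the identity of $L_{GL(U)}(V)$. For closure under the operation, let $\alpha,\beta\in J(n-r)$; since $L_{GL(U)}(V)$ is a semigroup we get $\alpha\beta\in L_{GL(U)}(V)$, and $\alpha\beta$ is an automorphism of $V$ as a composite of two automorphisms, so $\alpha\beta\in J(n-r)$ by the remark.

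The only point that needs a genuine argument — and the only place where the defining condition $\alpha|_U\in GL(U)$ (rather than merely $\alpha\in GL(V)$) is used — is closure under inversion. Let $\alpha\in J(n-r)$. Then $\alpha^{-1}\in GL(V)$, so it remains to check $\alpha^{-1}\in L_{GL(U)}(V)$, i.e. $\alpha^{-1}|_U\in GL(U)$. Since $\alpha|_U\in GL(U)$, the map $\alpha$ carries $U$ bijectively onto $U$; I would show $U\alpha^{-1}=U$ by observing that for $u\in U$, surjectivity of $\alpha|_U$ gives $u=w\alpha$ for some $w\in U$, and injectivity of $\alpha$ on $V$ then forces $u\alpha^{-1}=w\in U$. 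Hence $\alpha^{-1}$ maps $U$ into $U$, and because $\alpha^{-1}$ is injective while $\alpha|_U$ maps $U$ onto $U$, the restriction $\alpha^{-1}|_U$ is precisely $(\alpha|_U)^{-1}\in GL(U)$. Therefore $\alpha^{-1}\in L_{GL(U)}(V)$, and being an automorphism of $V$ it lies in $J(n-r)$.

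Combining these three facts, $J(n-r)$ is a submonoid of $L_{GL(U)}(V)$ that is closed under taking inverses, hence a subgroup; equivalently, $J(n-r)$ is exactly the group of units of the monoid $L_{GL(U)}(V)$. I do not anticipate any real obstacle here: the whole argument is a short direct verification, and the only step deserving care is the identity $U\alpha^{-1}=U$, which is where the hypothesis on the restriction to $U$ is essential.
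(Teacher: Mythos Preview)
Your proof is correct, but it takes a different route from the paper's. The paper exploits the standing hypothesis of Section~5 that $V$ is finite-dimensional over a finite field: once one notes $J(n-r)\subseteq GL(V)$ and that $GL(V)$ is a \emph{finite} group, it suffices to check that $J(n-r)$ is nonempty and closed under composition, since any nonempty finite subset of a group closed under the operation is automatically a subgroup. Thus the paper never verifies closure under inversion explicitly.

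Your argument, by contrast, checks the identity, closure, and inversion axioms directly and makes no use of finiteness; in particular your careful verification that $U\alpha^{-1}=U$ (equivalently $\alpha^{-1}|_U=(\alpha|_U)^{-1}$) is exactly the step the paper sidesteps via the finite-subgroup criterion. What you gain is a proof valid for arbitrary $V$ and arbitrary fields, together with the pleasant identification of $J(n-r)$ as the group of units of $L_{GL(U)}(V)$; what the paper gains is brevity in the finite setting it is working in.
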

\begin{proof}
	By the above remark, we have $J(n-r)\subseteq GL(V)$. Hence it suffices to show that $J(n-r)$ is a subgroup of a finite group $GL(V)$. Let $\alpha,\beta\in J(n-r)$. Then $V\alpha$ and $V\beta$ are $V$ which implies that $V\alpha\beta=V\beta=V$. Hence $\alpha\beta\in J(n-r)$ and so $J(n-r)$ is closed under composition.
\end{proof}

Recall that an \textit{internal semidirect product} of two groups $N$ and $H$, denoted by $H\ltimes N$, is a group $G$ such that $N$ is a normal subgroup of $G$, $N\cap H=\{1\}$, and $G=HN$. Next, we will show that $J(n-r)$ can be decomposed into an internal semidirect product of two subgroups.

\begin{lemma}
	Let $W$ be a subspace of $V$. Then the set
	\[
		\fix(W)=\{\alpha\in J(n-r):\alpha|_W=1_W\}
	\]
	is a subgroup of $J(n-r)$. Furthermore, $\fix(U)$ is a normal subgroup of $J(n-r)$.
\end{lemma}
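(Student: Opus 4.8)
The plan is to verify the three group axioms for $\fix(W)$ as a subset of the finite group $J(n-r)$, and then check normality of $\fix(U)$ directly. Since $J(n-r)$ is a finite group (it equals $GL(V)$ restricted to $L_{GL(U)}(V)$, as observed in the preceding remark and theorem), it suffices to show that $\fix(W)$ is nonempty and closed under composition; closure under inverses then comes for free.

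First I would note that $1_V\in\fix(W)$, so the set is nonempty. For closure, take $\alpha,\beta\in\fix(W)$. For each $w\in W$ we have $w\alpha = w$, hence $w(\alpha\beta) = (w\alpha)\beta = w\beta = w$, so $(\alpha\beta)|_W = 1_W$; combined with $\alpha\beta\in J(n-r)$ (which holds since $J(n-r)$ is a group), this gives $\alpha\beta\in\fix(W)$. Because $J(n-r)$ is finite, a nonempty closed subset is automatically a subgroup, so $\fix(W)$ is a subgroup of $J(n-r)$. (Alternatively one checks directly that if $\alpha|_W = 1_W$ then $\alpha^{-1}|_W = 1_W$, since $\alpha$ permutes $V$ and fixes $W$ pointwise, so it fixes $W$ setwise, and $\alpha|_W$ being the identity forces $\alpha^{-1}|_W$ to be the identity.)

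For the normality of $\fix(U)$ in $J(n-r)$, I would take $\gamma\in J(n-r)$ and $\alpha\in\fix(U)$, and show $\gamma^{-1}\alpha\gamma\in\fix(U)$. The key point is that every $\gamma\in J(n-r)\subseteq L_{GL(U)}(V)$ satisfies $\gamma|_U\in GL(U)$, so $U\gamma = U$, i.e. $\gamma$ maps $U$ bijectively onto $U$; likewise $U\gamma^{-1} = U$. Now for $u\in U$, write $u = u'\gamma$ with $u'\in U$ (possible since $\gamma$ restricts to a bijection of $U$); then $u(\gamma^{-1}\alpha\gamma) = (u'\gamma\gamma^{-1})\alpha\gamma = u'\alpha\gamma = u'\gamma = u$, using $\alpha|_U = 1_U$. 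Hence $(\gamma^{-1}\alpha\gamma)|_U = 1_U$, and since $\gamma^{-1}\alpha\gamma\in J(n-r)$, we conclude $\gamma^{-1}\alpha\gamma\in\fix(U)$, proving $\fix(U)\trianglelefteq J(n-r)$.

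The argument is essentially routine; the only place requiring a little care is the normality computation, where one must use the defining property of $L_{GL(U)}(V)$ — namely that conjugating elements $\gamma$ preserve $U$ (map $U$ onto $U$), not merely that they fix $U$ setwise in some weaker sense. This is exactly the feature that distinguishes $\fix(U)$ from $\fix(W)$ for a general subspace $W$, which need not be invariant under arbitrary elements of $J(n-r)$ and hence need not give a normal subgroup. I do not anticipate any genuine obstacle.
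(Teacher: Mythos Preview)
Your proof is correct and follows essentially the same approach as the paper: both verify that $\fix(W)$ contains $1_V$ and is closed under composition (with finiteness of $J(n-r)$ supplying inverses), and both establish normality of $\fix(U)$ by using that every $\gamma\in J(n-r)\subseteq L_{GL(U)}(V)$ restricts to an automorphism of $U$, so that conjugation preserves the condition $\alpha|_U=1_U$. The only cosmetic difference is that the paper writes the normality check as $(\beta\alpha\beta^{-1})|_U=\beta|_U\,\alpha|_U\,\beta^{-1}|_U=\beta|_U\,\beta^{-1}|_U=1_U$, whereas you unwind this element by element; the content is the same.
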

\begin{proof}
	Obviously, the identity map $1_V$ is the identity element in $\fix(W)$. Let $\alpha,\beta\in\fix(W)$. Then $\alpha|_W=1_W=\beta|_W$ and so $(\alpha\beta)|_W=\alpha|_W\beta|_W=1_W$. Hence $\alpha\beta\in\fix(W)$.

	To prove normality of $\fix(U)$, let $\alpha\in\fix(U)$ and $\beta\in J(n-r)$. Then $\alpha|_U=1_U$ and so $(\beta\alpha\beta^{-1})|_U=\beta|_U\alpha|_U\beta^{-1}|_U=\beta|_U\beta^{-1}|_U=1_U$. Hence $\beta\alpha\beta^{-1}\in\fix(U)$.
\end{proof}

In general, if $W$ is a complement of $U$ in $V$, then the subgroup $\fix(W)$ as defined above is not normal in $J(n-r)$. See the following example.

\begin{example}
	Let $V=W\oplus U$ be such that $W=\{w\}$ and $U=\{u_1,u_2\}$. Let $\alpha\in J(n-r)=J(1)$ and $\beta\in\fix(W)$ be defined by
	\[
		\alpha= \left(
		\begin{array}{ccc}
		w   & u_1 & u_2\\
		w+u_1 & u_1 & u_2\\ 
		\end{array}
		\right)
	\]
	and
	\[
		\beta= \left(
		\begin{array}{ccc}
		w & u_1 & u_2\\
		w & u_2 & u_1\\ 
		\end{array}
		\right).
	\]
	Then $w\alpha\beta\alpha^{-1}=(w+u_1)\beta\alpha^{-1}=(w\beta+u_1\beta)\alpha^{-1}=(w+u_2)\alpha^{-1}=w\alpha^{-1}+u_2\alpha^{-1}=(w+u_1-u_1)\alpha^{-1}+u_2=(w+u_1)\alpha^{-1}-u_1\alpha^{-1}+u_2=w-u_1+u_2$ from which it follows that $W\alpha\beta\alpha^{-1}=\langle w-u_1+u_2\rangle\neq W$. Hence $\alpha\beta\alpha^{-1}\notin\fix(W)$. Therefore, $\fix(W)$ is not normal in $J(n-r)$.
\end{example}

The following theorem asserts that $J(n-r)$ can be decomposed into the internal semidirect product of two subgroups $\fix(U)$ and $\fix(W)$ where $W$ is a complement of $U$ in $V$.

\begin{theorem}
	Let $W$ be a complement of $U$ in $V$. Then $J(n-r)=\fix(W)\ltimes\fix(U)$.
\end{theorem}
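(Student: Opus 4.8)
The plan is to verify directly the three defining conditions of an internal semidirect product for the group $G=J(n-r)$ with normal factor $N=\fix(U)$ and complementary factor $H=\fix(W)$: namely, that $\fix(U)\trianglelefteq J(n-r)$, that $\fix(U)\cap\fix(W)=\{1_V\}$, and that $J(n-r)=\fix(W)\,\fix(U)$. The first condition, together with the fact that $\fix(W)$ and $\fix(U)$ are subgroups, has already been established in the lemma above, so only the last two points need work.

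For the intersection: if $\alpha\in\fix(U)\cap\fix(W)$, then $\alpha|_U=1_U$ and $\alpha|_W=1_W$. Since $V=W\oplus U$, any $v\in V$ can be written as $v=w+u$ with $w\in W$ and $u\in U$, whence $v\alpha=w\alpha+u\alpha=w+u=v$, so $\alpha=1_V$. This is immediate.

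The substantive step is the factorization $J(n-r)=\fix(W)\,\fix(U)$. Given $\alpha\in J(n-r)$ — equivalently, by the preceding remark, an automorphism of $V$ with $\alpha|_U\in GL(U)$ — I would fix a basis $\{w_i\}$ of $W$ together with the basis $\{u_k\}$ of $U$ and define $\beta\in L_{GL(U)}(V)$ by $w_i\beta=w_i$ and $u_k\beta=u_k\alpha$. Then $\beta$ is the direct sum $1_W\oplus\alpha|_U$ relative to $V=W\oplus U$, hence an automorphism of $V$ with $\beta|_U=\alpha|_U\in GL(U)$; thus $\beta\in J(n-r)$ and, by construction, $\beta\in\fix(W)$. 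Now set $\gamma=\beta^{-1}\alpha$. Since $J(n-r)$ is a group (proved earlier), $\gamma\in J(n-r)$; moreover $\beta^{-1}$ maps $U$ into $U$ with $\beta^{-1}|_U=(\alpha|_U)^{-1}$, so $\gamma|_U=(\alpha|_U)^{-1}\alpha|_U=1_U$, i.e.\ $\gamma\in\fix(U)$. Then $\alpha=\beta\gamma\in\fix(W)\,\fix(U)$, which is what we needed.

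I do not expect a genuine obstacle: the only point demanding a little care is confirming that the auxiliary map $\beta$ is actually invertible (so that $\beta^{-1}$ and hence $\gamma$ make sense), and this is immediate because $\beta$ is the direct sum of $1_W$ and the invertible map $\alpha|_U$. The one genuinely useful idea is the choice of $\beta$: by making it agree with $\alpha$ on $U$ while fixing $W$, the quotient $\beta^{-1}\alpha$ is forced to act as the identity on $U$, so it falls into the normal factor $\fix(U)$, giving the desired decomposition $J(n-r)=\fix(W)\ltimes\fix(U)$.
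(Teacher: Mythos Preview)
Your argument is correct and follows essentially the same route as the paper: the paper's $\alpha'$ is exactly your $\beta=1_W\oplus\alpha|_U$, and its explicitly constructed $\alpha''$ (sending $w_i\mapsto w_i\alpha$ and fixing each $u_k\alpha$) coincides with your $\gamma=\beta^{-1}\alpha$. The only difference is cosmetic---you obtain the second factor as $\beta^{-1}\alpha$ and then check it lies in $\fix(U)$, while the paper writes it down on a basis---so nothing further needs to be said.
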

\begin{proof}
	For each $\alpha\in J(n-r)$, we can write $V\alpha=\langle w_1\alpha,w_2\alpha,\ldots,w_{n-r}\alpha\rangle\oplus U$ where $W=\langle w_1,w_2,\ldots w_{n-r}\rangle$. Let 
	\[
		\alpha'=\left(
			\begin{array}{cccccccc}
			w_1 & w_2 & \cdots & w_{n-r} & u_1 & u_2 & \cdots & u_r\\
			w_1 & w_2 & \cdots & w_{n-r} & u_1\alpha & u_2\alpha & \cdots & u_r\alpha\\ 
			\end{array}
			\right)
	\]
	and
	\[
		\alpha''=\left(
			\begin{array}{cccccccc}
			w_1 & w_2 & \cdots & w_{n-r} & u_1\alpha & u_2\alpha & \cdots & u_r\alpha\\
			w_1\alpha & w_2\alpha & \cdots & w_{n-r}\alpha & u_1\alpha & u_2\alpha & \cdots & u_r\alpha\\ 
			\end{array}
			\right).
	\]
	It is clear that $\alpha'\in\fix(W)$, $\alpha''\in\fix(U)$ and $\alpha=\alpha'\alpha''$. Hence $J(n-r)=\fix(W)\fix(U)$. Moreover, it is easy to see that $\fix(U)\cap\fix(W)=\{1_V\}$. Therefore, $J(n-r)=\fix(W)\ltimes\fix(U)$ since $\fix(U)$ is a normal subgroup of $J(n-r)$.
\end{proof}

Let $W=\langle w_1,w_2,\ldots,w_{n-r}\rangle$ be a fixed complement of $U$ in $V$. Then for each element $\alpha\in\fix(W)$, we can write
\[
	\alpha= \left(
	\begin{array}{cccccccc}
	w_1 & w_2 & \cdots & w_{n-r} & u_1 & u_2 & \cdots & u_r\\
	w_1 & w_2 & \cdots & w_{n-r} & u_1\alpha & u_2\alpha & \cdots & u_r\alpha\\ 
	\end{array}
	\right).
\]
Define a function $\phi: \fix(W)\to GL(U)$ by $\alpha\mapsto\alpha|_U$. It is straightforward to verify that $\phi$ is an isomorphism. Therefore, we obtain the following result.

\begin{proposition}
	Let $W$ be a complement of $U$ in $V$. Then $\fix(W)\cong GL(U)$.
\end{proposition}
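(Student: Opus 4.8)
The plan is to show that the map $\phi:\fix(W)\to GL(U)$ defined by $\alpha\mapsto\alpha|_U$ is a well-defined group isomorphism. Well-definedness is immediate from the definitions: if $\alpha\in\fix(W)\subseteq J(n-r)\subseteq L_{GL(U)}(V)$, then $\alpha|_U\in GL(U)$ by the very definition of $L_{GL(U)}(V)$, so $\alpha\phi$ genuinely lands in $GL(U)$. Since $\fix(W)$ is already known to be a group and $GL(U)$ is a group, it then remains to check the three properties: homomorphism, injectivity, surjectivity.

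First I would verify that $\phi$ is a homomorphism, which is the only step where one must be a little careful about the left-to-right convention for composition. For $\alpha,\beta\in\fix(W)$ and $u\in U$ we have $u\alpha\in U$ because $\alpha\in L_{GL(U)}(V)$, hence $u(\alpha\beta)=(u\alpha)\beta=(u\alpha)(\beta|_U)=\bigl(u(\alpha|_U)\bigr)(\beta|_U)$; this gives $(\alpha\beta)|_U=(\alpha|_U)(\beta|_U)$, i.e. $(\alpha\beta)\phi=(\alpha\phi)(\beta\phi)$. For injectivity, suppose $\alpha\phi=1_U$; since $\alpha\in\fix(W)$ we also have $\alpha|_W=1_W$, and because $V=W\oplus U$ this forces $\alpha=1_V$, so $\phi$ is injective.

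For surjectivity, given $g\in GL(U)$, write $U=\langle u_1,\ldots,u_r\rangle$ and define $\alpha\in L(V)$ by $w_i\alpha=w_i$ for all $i$ and $u_j\alpha=u_jg$ for all $j$, extended linearly (this is legitimate since $\{w_1,\ldots,w_{n-r},u_1,\ldots,u_r\}$ is a basis of $V$). Then $\alpha|_W=1_W$ and $\alpha|_U=g\in GL(U)$, so $\alpha\in L_{GL(U)}(V)$, and $V\alpha=W\alpha+U\alpha=W+Ug=W+U=V$, so by the Remark preceding the theorem $\alpha$ is an automorphism of $V$ and hence $\alpha\in J(n-r)$; thus $\alpha\in\fix(W)$ with $\alpha\phi=g$. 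The argument is entirely routine and I do not expect a real obstacle; the only points to keep an eye on are maintaining the composition convention consistently in the homomorphism step and observing that the $\alpha$ built in the surjectivity step automatically belongs to $J(n-r)$ because it fixes $W$ pointwise and restricts to a bijection of $U$, hence is bijective on $V=W\oplus U$.
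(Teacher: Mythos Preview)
Your proof is correct and follows exactly the same approach as the paper: the paper defines the identical map $\phi:\fix(W)\to GL(U)$, $\alpha\mapsto\alpha|_U$, and simply asserts that it is ``straightforward to verify'' that $\phi$ is an isomorphism. Your write-up spells out the routine verifications (homomorphism, injectivity, surjectivity) that the paper omits, and all of them are handled correctly.
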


By the above results, we have a generating set of $J(n-r)$ depends on generating sets of $\fix(U)$ and $GL(U)$. Refer to W. C. Waterhouse \cite{Waterhouse}, the general linear group $GL(U)$ can be generated by two elements. Now, we will identify a structural property of the subgroup $\fix(U)$ that can be utilized to determine its generating set.

\begin{theorem}
	Let $W$ be a complement of $U$ in $V$. Then the set
	\[
		G(W)=\{\alpha\in\fix(U):W\alpha=W\}
	\]
	is a subgroup of $\fix(U)$ which is isomorphic to $GL(W)$.
\end{theorem}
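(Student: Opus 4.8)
The plan is to exhibit the restriction map
\[
	\psi\colon G(W)\to GL(W),\qquad \alpha\mapsto\alpha|_W,
\]
and prove that it is a well-defined group isomorphism; the assertion that $G(W)$ is a subgroup of $\fix(U)$ will be settled along the way.

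First I would unwind the definitions by means of the Remark above: $J(n-r)$ is exactly the set of automorphisms $\alpha$ of $V$ with $\alpha|_U\in GL(U)$, so $\fix(U)$ consists of the automorphisms of $V$ that fix $U$ pointwise, and $G(W)$ consists of those $\alpha\in\fix(U)$ that in addition satisfy $W\alpha=W$. For such an $\alpha$, the restriction $\alpha|_W$ is an injective linear self-map of the finite-dimensional space $W$ whose image is all of $W$, hence $\alpha|_W\in GL(W)$; this makes $\psi$ well defined. The subgroup property is then immediate: $1_V\in G(W)$; if $\alpha,\beta\in G(W)$ then $\alpha\beta$ fixes $U$ pointwise and $W(\alpha\beta)=(W\alpha)\beta=W\beta=W$, so $\alpha\beta\in G(W)$; and from $u\alpha=u$ for $u\in U$ and from $W\alpha=W$ one gets $u\alpha^{-1}=u$ and $W\alpha^{-1}=W$, so $\alpha^{-1}\in G(W)$.

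Next I would check the three homomorphism/bijectivity statements. Since $W$ is invariant under every $\alpha\in G(W)$, for $w\in W$ we have $w(\alpha\beta)=(w\alpha)\beta=(w\alpha)(\beta|_W)$, so $(\alpha\beta)|_W=(\alpha|_W)(\beta|_W)$, i.e.\ $\psi$ is a homomorphism. For injectivity, if $\psi(\alpha)=1_W$ then $\alpha|_W=1_W$ and also $\alpha|_U=1_U$, and since $V=W\oplus U$ linearity forces $\alpha=1_V$. For surjectivity, given $\theta\in GL(W)$, use $V=W\oplus U$ to define $\alpha\in L(V)$ by $\alpha|_W=\theta$ and $\alpha|_U=1_U$; the linear map that equals $\theta^{-1}$ on $W$ and $1_U$ on $U$ is a two-sided inverse, so $\alpha\in GL(V)$, the condition $\alpha|_U=1_U\in GL(U)$ places $\alpha$ in $J(n-r)$ and hence in $\fix(U)$, while $W\alpha=W\theta=W$ places $\alpha$ in $G(W)$; evidently $\psi(\alpha)=\theta$.

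There is no genuine obstacle in this argument; the only point that wants a moment's care is, in the surjectivity step, confirming that the map $\alpha$ built from a given $\theta\in GL(W)$ is an automorphism of the whole space $V$ (not merely of $W$) and fixes $U$ pointwise, and both facts follow at once from the decomposition $V=W\oplus U$ together with the invertibility of $\theta$ on $W$ and the choice $\alpha|_U=1_U$.
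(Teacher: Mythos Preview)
Your proof is correct and follows exactly the same approach as the paper: verify that $G(W)$ is a subgroup of $\fix(U)$ via identity and closure, then define the restriction map $\alpha\mapsto\alpha|_W$ to $GL(W)$ and check it is an isomorphism. The paper merely asserts the last step is ``straightforward to verify,'' whereas you have spelled out the homomorphism, injectivity, and surjectivity details explicitly.
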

\begin{proof}
	It is clear that the identity map $1_V$ is the identity element in $G(W)$. Let $\alpha,\beta\in G(W)$. Then $W\alpha\beta=W\beta=W$. Hence $\alpha\beta\in G(W)$. Hence $G(W)$ is a subgroup of $\fix(U)$.

	Define a function $\phi:G(W)\to GL(W)$ by $\alpha\mapsto\alpha|_W$. It is straightforward to verify that $\phi$ is an isomorphism.
\end{proof}

In general, the subgroup $G(W)$ as defined above is not normal in $\fix(U)$. See the following example.

\begin{example}
	Let $V=W\oplus U$ be such that $W=\langle w_1,w_2\rangle$ and $U=\langle u\rangle$. Let $\alpha\in\fix(U)$ and $\beta\in G(W)$ be defined by
	\[
		\alpha= \left(
		\begin{array}{ccc}
		w_1 & w_2 & u\\
		w_1+u & w_2 & u\\ 
		\end{array}
		\right)
	\]
	and
	\[
		\beta= \left(
		\begin{array}{ccc}
		w_1 & w_2 & u\\
		w_2 & w_1 & u\\ 
		\end{array}
		\right).
	\]
	Then $w_1\alpha\beta\alpha^{-1}=(w_1+u)\beta\alpha^{-1}=(w_1\beta+u\beta)\alpha^{-1}=(w_2+u)\alpha^{-1}=w_2\alpha^{-1}+u\alpha^{-1}=w_2+u$ and $w_2\alpha\beta\alpha^{-1}=w_2\beta\alpha^{-1}=w_1\alpha^{-1}=(w_1+u-u)\alpha^{-1}=(w_1+u)\alpha^{-1}-u\alpha^{-1}=w_1-u$ from which it follows that $W\alpha\beta\alpha^{-1}=\langle w_1-u,w_2+u\rangle\neq W$. Hence $\alpha\beta\alpha^{-1}\notin G(W)$. Therefore, $G(W)$ is not normal in $\fix(U)$.
\end{example}

According to \cite{Tingley}, the author gave a constructive proof for counting the number of complements of an $r$-dimension subspace $U$ in an $n$-dimensional vector space $V$ over a finite field $\mathbb{F}$. Let $W$ be a complement of $U$ in $V$ with $\dim W=n-r$. We can write $W=\langle w_1,w_2,\ldots,w_{n-r}\rangle$. In \cite{Tingley}, the author showed that the $n-r$ translator $w_i+U=\{w_1+u'_1,w_2+u'_2,\ldots,w_{n-r}+u'_{n-r}\}$, where $u'_1,u'_2,\ldots,u'_{n-r}\in U$, forms a basis of a complement of $U$ in $V$. Moreover, there is a one-to-one correspondence between $n-r$ tuples $(x_i,x_2,\ldots,x_{n-r})$, $x_i\in w_i+U$ and complements of $U$ in $V$. By using this fact, we obtain the following result.

\begin{theorem}\label{thm: N generated by N(W)UG(W)}
	Let $W=\langle w_1,w_2,\ldots,w_{n-r}\rangle$ be a fixed complement of $U$ in $V$. Define a subset $N(W)$ of $\fix(U)$ by
	\[
		\left\{\left(
			\begin{array}{cccccccc}
			w_1 & w_2 & \cdots & w_{n-r} & u_1 & u_2 & \cdots & u_r\\
			w_1+u'_1 & w_2+u'_2 & \cdots & w_{n-r}+u'_{n-r} & u_1 & u_2 & \cdots & u_r\\ 
			\end{array}
			\right):u'_1,u'_2,\ldots,u'_{n-r}\in U\right\}.
	\]
	Then $\fix(U)=G(W)N(W)$.
\end{theorem}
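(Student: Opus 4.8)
The plan is to show that every element $\alpha \in \fix(U)$ factors as $\alpha = \gamma\nu$ with $\gamma \in G(W)$ and $\nu \in N(W)$. Since $G(W)N(W) \subseteq \fix(U)$ is clear (both factors fix $U$ pointwise and $\fix(U)$ is a subgroup), the real content is the containment $\fix(U) \subseteq G(W)N(W)$. Take $\alpha \in \fix(U)$, so $\alpha|_U = 1_U$ and $\alpha$ is an automorphism of $V$. First I would examine the images $w_i\alpha$: since $V = W \oplus U$ and $\alpha$ is invertible, the vectors $w_i\alpha$ together with $U$ span $V$, so writing $w_i\alpha = \bar{w}_i + \bar{u}_i$ with $\bar{w}_i \in W$ and $\bar{u}_i \in U$, the set $\{\bar{w}_1,\ldots,\bar{w}_{n-r}\}$ must be a basis of $W$. (If it were linearly dependent, then $\langle w_1\alpha,\ldots,w_{n-r}\alpha\rangle + U$ would have dimension $< n$, contradicting surjectivity of $\alpha$.)

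Next I would define the intended factors explicitly. Let $\gamma \in L(V)$ be the linear map sending $w_i \mapsto \bar{w}_i$ for each $i$ and fixing each $u_k$; this lies in $G(W)$ because it fixes $U$ pointwise, maps the basis $\{w_i\}$ of $W$ to the basis $\{\bar{w}_i\}$ of $W$ (hence $W\gamma = W$ and $\gamma$ is an automorphism, so $\gamma \in J(n-r)$), and therefore $\gamma \in \fix(U)$ with $W\gamma = W$. Then set $\nu = \gamma^{-1}\alpha$. I would verify $\nu \in N(W)$ by computing $w_i\nu = w_i\gamma^{-1}\alpha$: since $\gamma^{-1}$ maps $\bar{w}_i \mapsto w_i$ and fixes $U$, and $w_i\alpha = \bar{w}_i + \bar{u}_i$, one needs to track the images carefully — the cleanest route is to note $w_i\alpha = \bar{w}_i + \bar{u}_i = w_i\gamma + \bar{u}_i$, so $\alpha$ and the map $w_i \mapsto w_i\gamma + \bar u_i$ (fixing $U$) agree on the basis $\{w_i\} \cup \{u_k\}$ of $V$, hence are equal; then $\nu = \gamma^{-1}\alpha$ sends $w_i \mapsto w_i + \bar{u}_i\gamma^{-1}$ and fixes $U$, and since $\bar u_i \in U$ and $\gamma^{-1}$ fixes $U$ we get $\bar u_i \gamma^{-1} = \bar u_i \in U$, so $\nu$ has exactly the form displayed in the definition of $N(W)$ with $u_i' = \bar u_i$. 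This gives $\alpha = \gamma\nu \in G(W)N(W)$.

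Finally I would record that $N(W) \subseteq \fix(U)$: each such map fixes $U$ pointwise, and by the one-to-one correspondence from \cite{Tingley} (the $n-r$ translators $w_i + U$ with chosen representatives $w_i + u_i'$ form a basis of a complement of $U$ in $V$), the image $\langle w_1+u_1',\ldots,w_{n-r}+u_{n-r}'\rangle \oplus U = V$, so the map is surjective, hence an automorphism, hence in $J(n-r)$ and therefore in $\fix(U)$. Combined with $G(W) \subseteq \fix(U)$, this yields $G(W)N(W) \subseteq \fix(U)$, completing the equality.

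The main obstacle I anticipate is purely bookkeeping: keeping straight the interplay between $\gamma$, $\gamma^{-1}$, $\alpha$, and the decomposition $w_i\alpha = \bar w_i + \bar u_i$, and in particular justifying cleanly that $\{\bar w_i\}$ is a basis of $W$ (which is where the automorphism hypothesis $\alpha \in J(n-r)$ is genuinely used) and that $\bar u_i \gamma^{-1} = \bar u_i$. There is no deep difficulty — the statement is essentially that every complement-fixing-modulo-$U$ automorphism splits off its "shear part'' $N(W)$ from its "$W$-linear part'' $G(W)$ — but the displayed two-row notation makes it easy to commit sign or indexing errors, so I would be careful to verify $\alpha = \gamma\nu$ by evaluating both sides on the full basis $\{w_1,\ldots,w_{n-r},u_1,\ldots,u_r\}$.
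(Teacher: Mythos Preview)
Your overall strategy is correct and the factorisation $\alpha=\gamma\nu$ works, though one computation is off: with functions acting on the right as in this paper, $w_i\nu=(w_i\gamma^{-1})\alpha$, not $(w_i\alpha)\gamma^{-1}$. Writing $w_i\gamma^{-1}=\sum_j c_{ij}w_j\in W$ gives
\[
w_i\nu=\sum_j c_{ij}w_j\alpha=\sum_j c_{ij}(\bar w_j+\bar u_j)=w_i+\sum_j c_{ij}\bar u_j,
\]
so the $U$-shift is $\sum_j c_{ij}\bar u_j$, not $\bar u_i$; the formula $w_i\mapsto w_i+\bar u_i\gamma^{-1}$ you wrote is that of $\alpha\gamma^{-1}$. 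The conclusion $\nu\in N(W)$ is unaffected, since this shift still lies in $U$, so your argument stands once this line is corrected.

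Your route also differs from the paper's. You project each $w_i\alpha$ along $V=W\oplus U$ to read off the $G(W)$-part $\gamma\colon w_i\mapsto\bar w_i$ directly, then define $\nu=\gamma^{-1}\alpha$. The paper instead observes that $W\alpha$ is a complement of $U$, invokes the Tingley correspondence to obtain a basis of $W\alpha$ of the special form $\{w_i+u'_i\}$, takes preimages $w'_i$ with $w'_i\alpha=w_i+u'_i$, and argues that $\{w'_i\}$ is a basis of $W$; the factors are then $\beta\colon w'_i\mapsto w_i$ in $G(W)$ followed by $w_i\mapsto w_i+u'_i$ in $N(W)$. Your projection argument is more self-contained and avoids the external reference, while the paper's route makes the $N(W)$ component explicit from the outset rather than recovering it as a quotient $\gamma^{-1}\alpha$.
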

\begin{proof}
	 Let $\alpha\in\fix(U)$. Then $\alpha\in J(n-r)$ and $\alpha|_U=1_U$. We can write
	\[
		\alpha=\left(
			\begin{array}{cccccccc}
			w_1 & w_2 & \cdots & w_{n-r} & u_1 & u_2 & \cdots & u_r\\
			w_1\alpha & w_2\alpha & \cdots & w_{n-r}\alpha & u_1 & u_2 & \cdots & u_r\\ 
			\end{array}
			\right).
	\]
	We have $W'=\langle w_1\alpha,w_2\alpha,\ldots,w_{n-r}\alpha\rangle$ is a complement of $U$ in $V$. As we mentioned above, it follows that $W'=\langle w_1+u'_1,w_2+u'_2,\ldots,w_{n-r}+u'_{n-r}\rangle$ for some $u'_1,u'_2,\ldots,u'_{n-r}\in U$. We obtain there exist $w'_1,w'_2,\ldots,w'_{n-r}\in V$ such that $w_1+u'_1=w'_1\alpha$, $w_2+u'_2=w'_2\alpha$, $\ldots$, $w_{n-r}+u'_{n-r}=w'_{n-r}\alpha$ since $\alpha$ is surjective. Hence
	\[
		\alpha=\left(
			\begin{array}{cccccccc}
			w'_1 & w'_2 & \cdots & w'_{n-r} & u_1 & u_2 & \cdots & u_r\\
			w_1+u'_1 & w_2+u'_2 & \cdots & w_{n-r}+u'_{n-r} & u_1 & u_2 & \cdots & u_r\\ 
			\end{array}
			\right).
	\]
	Moreover, $W=\langle w'_1,w'_2,\ldots,w'_{n-r}\rangle$ since $\alpha$ is injective. Define $\beta\in G(W)$ and $\gamma\in N(W)$ by
	\[
		\beta=\left(
			\begin{array}{cccccccc}
			w'_1 & w'_2 & \cdots & w'_{n-r} & u_1 & u_2 & \cdots & u_r\\
			w_1 & w_2 & \cdots & w_{n-r} & u_1 & u_2 & \cdots & u_r\\ 
			\end{array}
			\right)
	\]
	and
	\[
		\gamma=\left(
			\begin{array}{cccccccc}
			w_1 & w_2 & \cdots & w_{n-r} & u_1 & u_2 & \cdots & u_r\\
			w_1+u'_1 & w_2+u'_2 & \cdots & w_{n-r}+u'_{n-r} & u_1 & u_2 & \cdots & u_r\\ 
			\end{array}
			\right).
	\]
	It is clear that $\alpha=\beta\gamma$.
\end{proof}

Finally, we give some properties of $N(W)$.

\begin{theorem}
	The subset $N(W)$ as in Theorem \ref{thm: N generated by N(W)UG(W)} is a normal subgroup of $\fix(U)$ such that $G(W)\cap N(W)=\{1_V\}$. Additionally, $N(W)$ is isomorphic to the direct sum of $n-r$ copies of $U$ as an additive group.
\end{theorem}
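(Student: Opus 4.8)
The plan is to work directly with the explicit description of the elements of $N(W)$. For a tuple $\mathbf{u}=(u'_1,\ldots,u'_{n-r})\in U^{\,n-r}$, let $\gamma_{\mathbf{u}}$ denote the linear map fixing each $u_j$ and sending $w_i\mapsto w_i+u'_i$. First I would check that $\gamma_{\mathbf{u}}\in\fix(U)$: it restricts to $1_U$ on $U$, and it carries the basis $\{w_i\}\cup\{u_j\}$ of $V$ to $\{w_i+u'_i\}\cup\{u_j\}$, which again spans $V$ (the $u_j$ are present, hence so are the $u'_i$, hence so are the $w_i$), so $\gamma_{\mathbf{u}}\in GL(V)$; since also $\gamma_{\mathbf{u}}|_U=1_U\in GL(U)$, the remark preceding the section identifying $J(n-r)$ with the automorphisms of $V$ shows $\gamma_{\mathbf{u}}\in J(n-r)$, and therefore $\gamma_{\mathbf{u}}\in\fix(U)$.

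Next I would define $\Phi:N(W)\to U^{\,n-r}$ by $\gamma_{\mathbf{u}}\mapsto\mathbf{u}$; this is well defined since $u'_i=w_i\gamma-w_i$ is forced by $\gamma$. The computation $w_i(\gamma_{\mathbf{u}}\gamma_{\mathbf{u}'})=(w_i+u'_i)\gamma_{\mathbf{u}'}=w_i+u''_i+u'_i$ gives $\gamma_{\mathbf{u}}\gamma_{\mathbf{u}'}=\gamma_{\mathbf{u}+\mathbf{u}'}$. In particular $N(W)$ is closed under composition and under inverses ($\gamma_{\mathbf{u}}^{-1}=\gamma_{-\mathbf{u}}$), so $N(W)$ is a subgroup of $\fix(U)$, and $\Phi$ is a bijective homomorphism onto the additive group $U^{\,n-r}$, i.e.\ the direct sum of $n-r$ copies of $U$; this settles the isomorphism claim. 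For the trivial intersection: if $\gamma_{\mathbf{u}}\in G(W)$ then $W\gamma_{\mathbf{u}}=W$, so $w_i+u'_i\in W$ for each $i$, hence $u'_i\in W\cap U=\{0\}$; thus $\mathbf{u}=0$ and $\gamma_{\mathbf{u}}=1_V$, giving $G(W)\cap N(W)=\{1_V\}$.

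For normality I would first record the intrinsic description $N(W)=\{\delta\in\fix(U): v\delta-v\in U\text{ for all }v\in V\}$. The inclusion $\subseteq$ is immediate from $w_i\gamma_{\mathbf{u}}-w_i=u'_i\in U$ and $u_j\gamma_{\mathbf{u}}-u_j=0$ together with linearity; conversely, a $\delta\in\fix(U)$ satisfying $v\delta-v\in U$ for all $v$ has $w_i\delta=w_i+(w_i\delta-w_i)$ with $w_i\delta-w_i\in U$, so $\delta=\gamma_{(w_1\delta-w_1,\ldots,w_{n-r}\delta-w_{n-r})}\in N(W)$. This description is stable under conjugation inside $\fix(U)$: if $\alpha\in\fix(U)$ then $\alpha|_U=1_U$ forces $\alpha^{-1}|_U=1_U$, so for $\delta\in N(W)$ and $v\in V$ one has $v(\alpha\delta\alpha^{-1})-v=\bigl((v\alpha)\delta-(v\alpha)\bigr)\alpha^{-1}\in U\alpha^{-1}=U$, because $(v\alpha)\delta-(v\alpha)\in U$. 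Hence $\alpha\delta\alpha^{-1}\in N(W)$, and $N(W)\trianglelefteq\fix(U)$.

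The only points requiring care are verifying that each $\gamma_{\mathbf{u}}$ is genuinely an automorphism (so that $N(W)$ really sits inside $\fix(U)$) and keeping the bookkeeping straight in the conjugation computation under the right-action convention; neither is a serious obstacle, and once the intrinsic characterization of $N(W)$ is in hand, normality is essentially automatic.
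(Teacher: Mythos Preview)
Your proof is correct. The subgroup, isomorphism, and trivial-intersection parts match the paper's argument almost verbatim: both compute $w_i\gamma_{\mathbf u}\gamma_{\mathbf u'}=w_i+u'_i+u''_i$ to get closure (and implicitly the homomorphism), both send $\gamma_{\mathbf u}\mapsto(u'_1,\ldots,u'_{n-r})$ for the isomorphism, and the intersection argument is the same up to phrasing (the paper says $\langle w_i\rangle=\langle w_i+u'_i\rangle$ forces $u'_i=0$, you observe $u'_i\in W\cap U=\{0\}$).

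The one genuine difference is the normality proof. The paper does a direct basis computation: it expands each $w\in\{w_1,\ldots,w_{n-r}\}$ as $\sum a_iw_i\alpha^{-1}+\sum b_ju_j$ and tracks $w\alpha\beta\alpha^{-1}$ term by term to arrive at $w+u''$ with $u''\in U$. You instead isolate the intrinsic description $N(W)=\{\delta\in\fix(U):v\delta-v\in U\text{ for all }v\}$ and then observe that this condition is conjugation-invariant via the one-line identity $v(\alpha\delta\alpha^{-1})-v=\bigl((v\alpha)\delta-(v\alpha)\bigr)\alpha^{-1}\in U\alpha^{-1}=U$. Your route is shorter and makes clear \emph{why} normality holds (the subgroup is defined by a basis-free property that $\fix(U)$ preserves), whereas the paper's computation verifies the same fact coordinate by coordinate; the two are logically equivalent but yours is the cleaner packaging.
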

\begin{proof}
	Let $W=\langle w_1,w_2,\ldots,w_{n-r}\rangle$. Then $V=\langle w_1,w_2,\ldots,w_{n-r}\rangle\oplus U$.
	
	We first show that $N(W)$ is a subgroup. Let $\alpha,\beta\in N(W)$. We can write
	\[
		\alpha=\left(
			\begin{array}{cccccccc}
			w_1 & w_2 & \cdots & w_{n-r} & u_1 & u_2 & \cdots & u_r\\
			w_1+u'_1 & w_2+u'_2 & \cdots & w_{n-r}+u'_{n-r} & u_1 & u_2 & \cdots & u_r\\ 
			\end{array}
			\right)
	\]
	and
	\[
		\beta=\left(
			\begin{array}{cccccccc}
			w_1 & w_2 & \cdots & w_{n-r} & u_1 & u_2 & \cdots & u_r\\
			w_1+u''_1 & w_2+u''_2 & \cdots & w_{n-r}+u''_{n-r} & u_1 & u_2 & \cdots & u_r\\ 
			\end{array}
			\right)
	\]
	for some $u'_i,u''_i\in U$. Then, for each $w_i\in\{w_1,w_2,\ldots,w_{n-r}\}$, we obtain
	\[
		w_i\alpha\beta=(w_i+u'_i)\beta=w_i\beta+u'_i\beta=w_i+u''_i+u'_i
	\]
	which implies that
	\[
		\alpha\beta=\left(
			\begin{array}{cccccccc}
			w_1 & w_2 & \cdots & w_{n-r} & u_1 & u_2 & \cdots & u_r\\
			w_1+u'''_1 & w_2+u'''_2 & \cdots & w_{n-r}+u'''_{n-r} & u_1 & u_2 & \cdots & u_r\\ 
			\end{array}
			\right)
	\]
	where $u'''_i=u'_i+u''_i\in U$ for all index $i$. It follows that $\alpha\beta\in N(W)$.

	To prove normality, let $\alpha\in\fix(U)$ and $\beta\in N(W)$. We can write
	\[
		\alpha=\left(
			\begin{array}{cccccccc}
			w_1\alpha^{-1} & w_2\alpha^{-1} & \cdots & w_{n-r}\alpha^{-1} & u_1 & u_2 & \cdots & u_r\\
			w_1 & w_2 & \cdots & w_{n-r} & u_1 & u_2 & \cdots & u_r\\ 
			\end{array}
			\right)
	\]
	and
	\[
		\beta=\left(
			\begin{array}{cccccccc}
			w_1 & w_2 & \cdots & w_{n-r} & u_1 & u_2 & \cdots & u_r\\
			w_1+u'_1 & w_2+u'_2 & \cdots & w_{n-r}+u'_{n-r} & u_1 & u_2 & \cdots & u_r\\ 
			\end{array}
			\right)
	\]
	for some $u'_1,u'_2,\ldots,u'_{n-r}\in U$. For each $w\in\{w_1,w_2,\ldots,w_{n-r}\}$, we have $w=\sum a_iw_i\alpha^{-1}+\sum b_ju_j$ for some scalars $a_i,b_j$. It follows that 
	\begin{eqnarray*}
		w\alpha\beta\alpha^{-1}&=&\sum a_iw_i\alpha^{-1}\alpha\beta\alpha^{-1}+\sum b_ju_j\alpha\beta\alpha^{-1}\\
		&=&\sum a_iw_i\beta\alpha^{-1}+\sum b_ju_j\\
		&=&\sum a_i(w_i+u'_i)\alpha^{-1}+\sum b_ju_j\\
		&=&\sum a_iw_i\alpha^{-1}+\sum a_iu'_i\alpha^{-1}+\sum b_ju_j\\
		&=&w+\sum a_iu'_i\\
		&=&w+u''
	\end{eqnarray*}
	for some $u''\in U$. We can write
	\[
		\alpha\beta\alpha^{-1}=\left(
			\begin{array}{cccccccc}
			w_1 & w_2 & \cdots & w_{n-r} & u_1 & u_2 & \cdots & u_r\\
			w_1+u''_1 & w_2+u''_2 & \cdots & w_{n-r}+u''_{n-r} & u_1 & u_2 & \cdots & u_r\\ 
			\end{array}
			\right)
	\]
	for some $u''_1,u''_2,\ldots,u''_{n-r}\in U$. Thus $\alpha\beta\alpha^{-1}\in N(W)$.

	To show that $G(W)\cap N(W)=\{1_V\}$, let $\alpha\in G(W)\cap N(W)$. Then we can write
	\[
		\alpha=\left(
			\begin{array}{cccccccc}
			w_1 & w_2 & \cdots & w_{n-r} & u_1 & u_2 & \cdots & u_r\\
			w_1+u'_1 & w_2+u'_2 & \cdots & w_{n-r}+u'_{n-r} & u_1 & u_2 & \cdots & u_r\\ 
			\end{array}
			\right)
	\]
	for some $u'_1,u'_2,\ldots,u'_{n-r}\in U$. It is clear that $\langle w_1,w_2,\ldots,w_{n-r}\rangle=\langle w_1+u'_1,w_2+u'_2,\ldots,w_{n-r}+u'_{n-r}\rangle$ if and only if $u'_1=u'_2=\cdots=u'_{n-r}=0$. Hence $\alpha=1_V$. 

	Finally, we prove that $N(W)$ is isomorphic to the direct sum of $n-r$ copies of $U$ as an additive group. Define a function $\phi:N(W)\to U^{n-r}$ by
	\[
		\left(
			\begin{array}{cccccccc}
			w_1 & w_2 & \cdots & w_{n-r} & u_1 & u_2 & \cdots & u_r\\
			w_1+u'_1 & w_2+u'_2 & \cdots & w_{n-r}+u'_{n-r} & u_1 & u_2 & \cdots & u_r\\ 
			\end{array}
		\right)\mapsto(u'_1,u'_2,\ldots,u'_{n-r}).
	\]
	It is routine matter to verify that $\phi$ is an isomorphism.
\end{proof}

The following corollary is immediate from the above two theorems.

\begin{corollary}
	$\fix(U)$ is an internal semidirect product of $N(W)$ and $G(W)$, that is, $\fix(U)=G(W)\ltimes N(W)$.
\end{corollary}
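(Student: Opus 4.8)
The plan is simply to verify the three defining conditions of an internal semidirect product against the earlier results, since the corollary is a bookkeeping consequence of Theorem \ref{thm: N generated by N(W)UG(W)} and the theorem immediately preceding this corollary. Recall from the start of Section \ref{sec: Generating sets} that an internal semidirect product $H\ltimes N$ is a group $G$ with $N\trianglelefteq G$, $N\cap H=\{1\}$, and $G=HN$. Here I would take $G=\fix(U)$, $H=G(W)$, and $N=N(W)$.

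First I would invoke the theorem preceding the corollary: it states precisely that $N(W)$ is a normal subgroup of $\fix(U)$ and that $G(W)\cap N(W)=\{1_V\}$. That disposes of the first two conditions at once. Next, I would invoke Theorem \ref{thm: N generated by N(W)UG(W)}, which gives $\fix(U)=G(W)N(W)$, i.e. $\fix(U)=HN$ in the notation above. (One should also note, or recall from the earlier theorems, that $G(W)$ is indeed a subgroup of $\fix(U)$, which is established in the theorem introducing $G(W)$.)

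Having all three conditions in hand, I would conclude directly from the definition that $\fix(U)=G(W)\ltimes N(W)$, which is the assertion of the corollary. There is essentially no obstacle here: the only thing to be careful about is matching the roles of the two subgroups correctly with the convention $H\ltimes N$ (the \emph{normal} factor $N(W)$ is written on the right, the complement $G(W)$ on the left), so that the factorization $\fix(U)=G(W)N(W)$ from Theorem \ref{thm: N generated by N(W)UG(W)} lines up with $G=HN$. No further computation is required.
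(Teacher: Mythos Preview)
Your proposal is correct and matches the paper's own treatment exactly: the paper simply declares the corollary ``immediate from the above two theorems,'' and you have spelled out precisely which ingredients from those theorems supply each of the three defining conditions of an internal semidirect product. There is nothing to add or correct.
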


It is well-known that any $r$-dimensional vector space $U$ over a finite field $\mathbb{F}$ is isomorphic to $\mathbb{F}^r$. Thus, we have the following corollary.

\begin{corollary}
	$N(W)$ is isomorphic to the direct sum of $n-r$ copies of $\mathbb{F}^r$ as an additive group.
\end{corollary}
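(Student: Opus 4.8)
The plan is to obtain this immediately by chaining the preceding theorem with the classification of finite-dimensional vector spaces. First I would recall that the preceding theorem provides an additive-group isomorphism $\phi\colon N(W)\to U^{n-r}$, where $U^{n-r}$ denotes the direct sum of $n-r$ copies of the additive group of $U$; concretely, $\phi$ sends the element of $N(W)$ determined by $u_1',\dots,u_{n-r}'\in U$ to the tuple $(u_1',\dots,u_{n-r}')$.

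Next I would use the fact recalled just above the statement: since $\dim U=r$, there is a vector-space isomorphism $\psi\colon U\to\mathbb{F}^r$. Being linear, $\psi$ is in particular an isomorphism of the underlying additive groups, so applying it in each coordinate yields an additive-group isomorphism $\Psi\colon U^{n-r}\to(\mathbb{F}^r)^{n-r}$, $(u_1',\dots,u_{n-r}')\mapsto(u_1'\psi,\dots,u_{n-r}'\psi)$. Composing, $\phi\Psi\colon N(W)\to(\mathbb{F}^r)^{n-r}$ is an isomorphism of additive groups, and $(\mathbb{F}^r)^{n-r}$ is precisely the direct sum of $n-r$ copies of $\mathbb{F}^r$, which is the assertion.

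There is essentially no obstacle here; the only (routine) point is to note that a linear isomorphism is simultaneously an isomorphism of the underlying additive groups, so that the coordinatewise map $\Psi$ is a well-defined bijective homomorphism. Hence the corollary follows at once from the two results cited.
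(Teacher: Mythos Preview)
Your proposal is correct and follows essentially the same approach as the paper: the paper simply notes that $U\cong\mathbb{F}^r$ and combines this with the preceding theorem's isomorphism $N(W)\cong U^{n-r}$, which is exactly the chain $\phi$ followed by the coordinatewise $\Psi$ that you wrote out.
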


%%%%%%%%%%%%%%%%%%%%%%%%%%End%%%%%%%%%%%%%%%%%%%%%%%%%%%%%%%%%%%%%%%%%%%%%%%%%%%%%%%%%%%%%%%%%%%%%%%%%%%%%%%%%%%%%%%%%%%%%%

\subsection*{Acknowledgments} This research was supported by Chiang Mai University.

\subsection*{Conflict of interest} The author declares no conflict of interest.

\bibliographystyle{abbrv}\addcontentsline{toc}{section}{References}
\bibliography{References}

\begin{thebibliography}{10}

\bibitem{Bugay}
L.~Bugay, R.~Sönmez, and H.~Ay{\i}k.
\newblock Ranks of certain semigroups of transformations whose restrictions are elements of a given semigroup.
\newblock {\em Bulletin of the Malaysian Mathematical Sciences Society}, 46(3):98, 2023.

\bibitem{Clifford1}
A.~H. Clifford and G.~B. Preston.
\newblock {\em The Algebraic Theory of Semigroups, Volume I}.
\newblock American Mathematical Society, 1961.

\bibitem{Clifford2}
A.~H. Clifford and G.~B. Preston.
\newblock {\em The Algebraic Theory of Semigroups, Volume II}.
\newblock American Mathematical Society, 1967.

\bibitem{Howie}
J.~M. Howie.
\newblock {\em Fundamentals of semigroup theory}.
\newblock London Mathematical Society monographs. Clarendon ; Oxford University Press, Oxford New York, 1995.

\bibitem{Konieczny}
J.~Konieczny.
\newblock Semigroups of transformations whose restrictions belong to a given semigroup.
\newblock {\em Semigroup Forum}, 104(1):109--124, 2022.

\bibitem{Laysirikul}
E.~Laysirikul.
\newblock Semigroups of full transformations with the restriction on the fixed set is bijective.
\newblock {\em Thai Journal of Mathematics}, 14(2):497--503, 2016.

\bibitem{Roman}
S.~Roman.
\newblock {\em Advanced linear algebra}.
\newblock Graduate texts in mathematics. Springer, New York, 3rd edition, 2007.

\bibitem{Sarkar}
M.~{Sarkar} and S.~N. {Singh}.
\newblock {Semigroups of (linear) transformations whose restrictions belong to a given semigroup}.
\newblock {\em arXiv e-prints}, page arXiv:2303.03861, Mar. 2023.

\bibitem{Sommanee}
W.~Sommanee.
\newblock Some properties of the semigroup {$PG_Y(X)$}: Green's relations, ideals, isomorphism theorems and ranks.
\newblock {\em Turkish Journal of Mathematics}, 45(4):1789--1800, 2021.

\bibitem{Tingley}
D.~Tingley.
\newblock Complements of linear subspaces.
\newblock {\em Mathematics Magazine}, 64(2):98--103, 1991.

\bibitem{Waterhouse}
W.~C. Waterhouse.
\newblock Two generators for the general linear groups over finite fields.
\newblock {\em Linear and Multilinear Algebra}, 24(4):227--230, 1989.

\end{thebibliography}

\begin{flushleft}
\vskip.3in

KRITSADA SANGKHANAN, Department of Mathematics, Faculty of Science, Chiang Mai University, Chiang Mai, 50200, Thailand; e-mail: kritsada.s@cmu.ac.th

\end{flushleft}
\end{document}